\title{Dicritical divisors and hypercurvettes}
\author[E. Artal]{Enrique Artal Bartolo}
\address{Departamento de Matem\'aticas, IUMA\\
Universidad de Zaragoza\\
C.~Pedro Cerbuna 12\\
50009 Zaragoza, Spain}
\urladdr{\url{http://riemann.unizar.es/~artal}}
\email{\href{mailto:artal@unizar.es}{artal@unizar.es}}
\author[W. Veys]{Willem Veys}
\address{KU Leuven, Department of Mathematics\\
Celestijnenlaan 200B box 2400\\
BE-3001 Leuven, Belgium}
\urladdr{\url{https://perswww.kuleuven.be/~u0005725/}}
\email{\href{mailto:wim.veys@kuleuven.be}{wim.veys@kuleuven.be}}
 \subjclass[2010]{}
 \keywords{}
\thanks{The first named author is partially supported by MCIN/AEI/10.13039/501100011033 (grant code: PID2020-114750GB-C31)
and by Departamento de Ciencia, Universidad y Sociedad del Conocimiento del Gobierno de Arag{\'o}n
(grant code: E22\_20R: ``{\'A}lgebra y Geometr{\'i}a'').
The second named author is partially supported by KU Leuven grant GYN-E4282-C16/23/010.}
\newcommand\zz{\mathcal{Z}}
\newcommand{\ct}{\mathcal{A}}
\newcommand{\spl}{\mathcal{H}}
\newcommand{\grande}{\ell}
\newcommand{\vecl}{{\boldsymbol{\lambda}}}
\newcommand{\good}{\mathfrak{G}}
\newcommand{\poly}{W}
\newtheorem{thm}{Theorem}[section]
\newtheorem{thm0}{Theorem}
\newtheorem{lemma}[thm]{Lemma}
\newtheorem{prop}[thm]{Proposition}
\newtheorem{goal}[thm]{Goal}
\newtheorem{case}{Case}
\newtheorem{step}{Step}
\theoremstyle{definition}
\newtheorem{dfn}[thm]{Definition}
\newtheorem{example}[thm]{Example}
\newtheorem{notation}[thm]{Notation}
\theoremstyle{remark}
\newtheorem{remark}[thm]{Remark}
\newtheorem{remarks}[thm]{Remarks}
\DeclareMathOperator{\mat}{Mat}
\DeclareMathOperator{\codim}{codim}
\DeclareMathOperator{\divi}{div}
\begin{document}

\begin{abstract}
	Germs of rational functions~$h$ on points $p$ of  smooth varieties~$S$ define germs of rational maps
	to the projective line.
%, which usually do not define a morphism; 
Assume that $p$ is in the indeterminacy locus of~$h$. 
	If $\pi:\hat{S}\to S$ is a birational map which is an isomorphism 	outside~$p$, then $h$ lifts to a germ of a rational
	map on $(\hat{S}, \pi^{-1}(p))$. The exceptional components $E_i$ of $\pi^{-1}(p)$ are classified according to the restriction of (the lift of) $h$ to $E_i$; the {\em 
	dicritical} components are those where this restriction induces a dominant map. In a series of papers, Abhyankar and the first named author studied this setting in dimension~$2$, where the main result is that, for any given $\pi$, there is a rational function $h$ with a prescribed  subset of exceptional components that are dicritical of some given degree.  %(weighted with some positive integers)

The concept of curvette of an exceptional component played a key role in the proof. 
The second named author extended previously the concept of curvette to the higher dimensional case. Here we use this concept to generalize the above result to arbitrary dimension. 
 \end{abstract}

\maketitle
\thispagestyle{empty}

\section*{Introduction}

The first named author studied with S.S.~Abhyankar~\cite{aba:13,aba:14} the relationship between curvettes
and rational functions with prescribed behavior in the two-dimensional case.
Let $p\in X_0$ be a smooth point of an algebraic 
complex surface. Let $f, g$ be two non-zero germs of polynomial functions at~$p$, both vanishing at~$p$. Then $h:=\frac{f}{g}$
defines a germ of a rational function which can be identified
with a germ of a such a map $h:(X_0,p)\dashrightarrow\mathbb{P}^1$ which is not defined at~$p$.
It is well-known that there is a sequence of point blow-ups such that for its composition
$\pi:(X_m, E)\to(X_0, p)$, the map~$h$ lifts to a
well-defined morphism $\tilde{h}:(X_m,E)\to\mathbb{P}^1$.

The exceptional components $E_1,\dots,E_m$ of $\pi$ have two different possible
behaviors with respect to $\tilde{h}$:
$\tilde{h}_{|E_i}$ is either constant or  surjective.
A component for which the latter holds is called \emph{dicritical}
and its degree is the one of
$\tilde{h}_{|E_i}:E_i\to\mathbb{P}^1$. In~\cite{aba:13,aba:14}
the following problem is solved. Starting from
a composition $\pi:(X_m, E)\to(X_0, p)$ of point blow-ups, and fixing
some of the exceptional components $E_{i_1},\dots,E_{i_r}$
together with some positive integers $d_{i_1},\dots,d_{i_r}$,
is there a rational function $h$ on $(X_0,p)$,
such that it can be lifted to a morphism $\tilde{h}:X_m\to\mathbb{P}^1$,
whose set of dicritical components is $\{E_{i_1},\dots,E_{i_r}\}$, and the degree
of the restriction of $\tilde{h}$ to $E_{i_j}$ is~$d_{i_j}$? 

The main tool to solve this problem is the use of~\emph{curvettes}.
A curvette $C_i$ of $E_i$ is a curve germ  at~$p$
whose strict transform in $X_m$ is a smooth curve, intersecting $E_i$ transversally at a point of $E_i\setminus\bigcup_{j\neq i} E_i$.
Considering \emph{generic}
families of $d_{i_j}$ curvettes for each $E_{i_j}$, the required
meromorphic (or rational) function $h=\frac{f}{g}$ can be constructed, taking $f$ and $g$ as suitable products of curvettes.

One of the motivations for that work, as well as for the present paper, comes from the study
of polynomial maps at infinity; the rational extension of these maps to the projective
space is not a morphism and indeterminacy points can be found at the hyperplane of
infinity. The local study of these rational functions at infinity involves birational modifications
and dicritical divisors are essential objects, see e.g.~\cite{arn, glm, lw}.

In this paper we focus on this problem in higher
dimension $n>2$.
We consider again a composition of admissible blow-ups $\pi:(X_m, E)\to(X_0, p)$ that is an isomorphism outside~$p$, hence starting with blowing up~$p$.
Now it is not possible
to lift a rational function~$h$ through
 $\pi$ to a {\em morphism} $X_m\to\mathbb{P}^1$, since the subvariety of indeterminacy
points of~$h$ is of positive dimension.
%{\color{red} and in our apprach the first blow-up is~$p$ and all the centers are in its preimage}.
Nevertheless, the notion of dicritical component
can be define accordingly and a similar problem
can be stated.

\smallskip
The notions of curvette and dicritical divisors have been used 
in several works, e.g. \cite{cae:10, oku, AL:11, AH:12, clmn}, with analytic or algebraic flavor, in smooth or singular ambient spaces but always in the realm of  dimension~$2$.

There are several possible generalizations
of the notion of curvette when $n>2$. The simplest one
is to consider one-dimensional curvettes as in the surface
case with a similar definition. The problem appears
if one wants to define \emph{hypercurvettes};
there is not a canonical notion of higher dimensional
curvette of some~$E_i$, as a \emph{ hypersurface} with \emph{nice} intersections with $E_i$.
In~\cite{vy:07}, the second
named author proposes such a hypersurface generalization.

 There are several intrinsic differences with the
$2$-dimensional case. First, their \emph{equisingularity} class is not determined by~$E_i$ (as in the surface case); there are several admissible choices.  %; some other differences will be pointed out later.
Next, in the surface case, there is a notion of intersection
matrix which is an important tool, namely the $(m\times m)$ matrix
of intersection numbers of  curvettes $C_1,\dots, C_m$.
%Basically, one projects each curvette to $X_0$ and computes the intersection number.
This intersection matrix has only positive entries, it is
symmetric and positive definite, and all its principal submatrices are
unimodular. For $n>2$, a similar intersection matrix
can be defined, considering the intersection numbers
of a one-dimensional general curvette for $E_i$
%for $E_i$ \emph{generic} curvettes
and a (codimension one)
general hypercurvette for $E_j$. This new intersection matrix
is neither unique nor symmetric, but it has only
positive entries and the above property on its principal submatrices
is preserved.

Another crucial difference is the intersection of a hypercurvette with the exceptional locus.
In the two-dimensional case, a curvette only intersects its associated
exceptional curve (in one point) and this fact simplifies many arguments.  But in higher dimensions a hypercurvette of $E_i$
intersects in general {\em many} other exceptional components $E_\ell$. This creates various difficulties, similar to the ones encountered in the study of filtrations associated to singularities~\cite{vn:24}.
This notion of higher dimensional
curvette is a main ingredient in this work.

\smallskip
Another difference with the two-dimensional
case is that the notion of degree of a dicritical
component in a composition of blow-ups
depends on the order in which these
blow-ups are constructed; in Example~\ref{ex1}
we present a composition of three blow-ups
over $0\in\mathbb{C}^3$ with two essentially
different orderings, giving rise to such a difference in degree.

The main result of the paper is the following one.
%We consider a sequence $\pi_1,\dots,\pi_m$, of blow-ups as in \eqref{sequence}, with exceptional components $E_1,\dots,E_m$.
Given a smooth point $p$ in a complex algebraic variety $X_0$,
we fix a modification $\pi$ of $X_0$ which is an isomorphism
outside~$p$, obtained as a composition of blow-ups with admissible center, where the first blow-up has center $p$.
Let $E_1,\dots,E_m$ denote the exceptional components of $\pi$.

\begin{thm0}\label{thm:main}
Let $\emptyset \neq J\subset\{1,\dots,m\}$ and let $d_j\in\mathbb{Z}_{\geq 1}$, $j\in J$.
Then there exists a rational function $h$ for which $E_j$ is  dicritical of degree~$d_j$ if $j\in J$,
and $E_j$ is
non-dicritical  if $j\notin J$.
%either non-dicritical or dicritical of degree~$0$ if $j\notin J$.
\end{thm0}

In the two-dimensional case, the starting point
is to prove this result for one dicritical and $d_1=1$; a product
of such functions (with positive and negative exponents) then gives the desired result.
Using products of equations of hypercurvettes
%(with positive and negative exponents) 
we obtain
a similar result in Proposition~\ref{prop:one-dic},
but the statement is weaker, since we do not control the degree
of the dicritical.

In Proposition~\ref{prop3} we already make substantial progress, since there we can prescribe the degree, but only for
the \emph{last} dicritical. More precisely, we can ensure
the existence of a rational function for which a fixed $E_s$, $s\leq m$,
is a dicritical component with prescribed degree and
$E_j$, $j<s$, are not dicritical components, but we loose
control on $E_j$, $j>s$. This issue is solved in
Theorem~\ref{thm4}. The way to the proof of Theorem \ref{thm:main} is then quite
straightforward.

\smallskip
In \S\ref{sec:settings} we introduce the main objects, as
the sequence of blow-ups of \eqref{sequence}
and the higher dimensional curvettes
of Proposition~\ref{hypercurvettes}, following~\cite{vy:07}.
Some more tools and mainly illustrating examples are
exhibited. In \S\ref{sec:rational} the problem is stated
together with the first result,
Proposition~\ref{prop:one-dic}. In \S\ref{sec:special}
we introduce the notion of \emph{special hypersurface}
associated to an exceptional component. These hypersurfaces
are used to \lq compensate\rq\ the effect of the hypercurvettes used
in the proof of Proposition~\ref{prop:one-dic}, more precisely in \S\ref{sec:one_dic} to prove
Proposition~\ref{prop3}. Example~\ref{example_special}
shows however that we may have problems with the last exceptional
components.  Theorem~\ref{thm4} solves these issues
and we show with some examples the idea behind the proof.
We end this section with the proof of the main theorem.
%  Everything is put together
% in \S\ref{sec:goal} to prove the main theorem.
The long proof of Theorem~\ref{thm4} is carried out in the last
section \S\ref{sec:proof_thm}.

% {\color{red} Maybe we could move the long proof to a separate section}

\section{Settings}\label{sec:settings}
\numberwithin{equation}{section}

In this paper we deal with complex algebraic varieties, where
the concept \emph{variety} means irreducible algebraic set.
Given a smooth point $p$ in a complex algebraic variety $X_0$ of dimension $n$,
we fix a modification $\pi$ of $X_0$ which is an isomorphism
outside~$p$. More precisely, we consider a chain
of blow-ups with admissible centers
\begin{equation}\label{sequence}
\begin{tikzcd}
%\mathbb{C}^n=
X_0\arrow[<-,r,"\pi_1"]&X_1\ar[<-,r,"\pi_2"]&\dots
\ar[<-,r,"\pi_{m-1}"]&X_{m-1}\ar[<-,r,"\pi_{m}"]&X_m,
\arrow[llll,"\pi" above,bend left=30]
\end{tikzcd}
\end{equation}
%\qquad
%\pi_{i,j}:X_j\to X_i,
where $\pi_1$ is the blow-up at $p$.
%We put also $\pi_{i,j}=\pi_{i+1}\circ\cdots\circ\pi_j:X_j\to X_i$.
% For simplicity, we assume from now on that $X_0=\mathbb{C}^n$ and $p=\mathbf{0}$.
%While we are mainly interested in the composition~$\pi$, some of the constructions depend on the sequence of blow-ups.
Let us denote by $Z_i\subset X_{i-1}$ the center of $\pi_i$ and by $E_i\subset X_i$ the exceptional divisor of~$\pi_i$.
The consecutive strict transforms of $E_i$ by $\pi_j, j>i$, are still denoted by $E_i$, in order not to overload the notation.
We thus have $Z_1=p$, and for $j>1$ we assume that the center~$Z_j$ of $\pi_j$ is contained
in $\bigcup_{i<j} E_i$ (the exceptional locus of $\pi_{j-1}$), and has normal crossings with it.
We put also
\[
\pi_{j,i}=\pi_{i+1}\circ\cdots\circ\pi_j:X_j\to X_i,\quad
j>i.
\]
%{\color{red} Please, check this sentence.}
The divisors $E_1,\dots,E_m$ in $X_m$ define divisorial
valuations $\nu_1,\dots,\nu_m$, respectively. We will use these valuations for rational functions
and divisors
on $X_m$.

% {\color{red} I would eliminate this sentence.} For simplicity, we assume from now on that $X_0=\mathbb{C}^n$ and $p=\mathbf{0}$.

%\smallskip
While we are mainly interested in the composition~$\pi$, most of the constructions depend on the sequence of blow-ups.
When $E_j$ is created, it can be expressed as a bundle
\[
\mathbb{P}^{n_j}\hookrightarrow
E_j\xrightarrow{\quad\sigma_j\quad} Z_j,\quad n_j:=n-\dim Z_j-1.
\]
We define a special class of curves in~$E_j$,
namely the \emph{general} lines~$\ell_j$ in \emph{general} fibers of the above projective space bundle.
Note that when $Z_j$ is a point, $\ell_j$ is just a general line in a projective space $\mathbb{P}^{n-1}$.
When $n=3$ and $Z_j\cong\mathbb{P}^1$, $E_j$ is a ruled surface
having a section with self-intersection~$-a$,
i.e. isomorphic to a Hirzebruch surface $\Sigma_a$
for some $a\geq 0$.

\begin{remark}
Note that in $X_m$ the divisor $E_j$ is in general a blown-up of the original~$E_j$. If we choose another sequence of blow-ups for which
the final result coincides, the class of the curves~$\ell_j$ may be different.
\end{remark}

\begin{example}\label{ex1}
Let $X_1\xrightarrow{\pi_1} X_0=\mathbb{C}^3$ be the blow-up of the origin, with exceptional component $E_1\cong\mathbb{P}^2$. The class $\ell_1$ is given by a general line in $E_1$. We pick  a line $L\subset E_1$ and a point $P\in L$
(as $\ell_1$ is general $P\notin\ell_1$).

Let
$X_2\xrightarrow{\pi_2} X_1$ be the blow-up of~$P$.
Then $E_2\subset X_2$ is isomorphic to $\mathbb{P}^2$ and
$E_1\subset X_2$ is the blow-up of a point in $\mathbb{P}^2$,
i.e., isomorphic to $\Sigma_1$. Let $E_{12}:=E_1\cap E_2$; it is a line in $E_2$ and the negative section in~$E_1$, see Figure~\ref{fig:2blowups}.
The class~$\ell_1$ corresponds to the $(+1)$-sections of~$\Sigma_1$,
while $\ell_2$ is a general line in~$E_2$. The line~$L$ becomes a fiber of~$E_1$ and $L\cap E_{12}=L\cap E_2$ is a point.

\begin{figure}[ht]
\begin{center}
\begin{tikzpicture}[yscale=.5]
\fill (-2,0) ellipse [x radius=.075cm, y radius=0.15cm] node[left] {$\mathbf{0}\in\mathbb{C}^3$};
\draw[<-] (-1.8,0)--(0,0) node[above,pos=.5] {$\pi_1$};
\draw (0,-1)--(3,-1)--(4,1)--(1,1)--cycle node[below=5pt] {$E_1\cong\mathbb{P}^2$};
\draw (.75,0)-- node[above] {$P$} (3.25,0) node[above] {$L$};
\fill (2,0) ellipse [x radius=.075cm, y radius=0.15cm];
\draw[dashed]  (1,-.75)--  node[above right=-3pt] {$\ell_1$} (1,.75);
\draw[<-] (4,0)--(6,0) node[above,pos=.5] {$\pi_2$};

\begin{scope}[xshift=6cm,yshift=1.5cm]
\draw (0,-1)--(3,-1)--(4,1)--(1,1)--cycle;
\node[right] at (3,-1) {$E_1\cong\Sigma_1$};
\draw (.75,0)-- (3.25,0) node[above] {$L$};
%\node[right] at (4,1) {$S=E_1\cap E_2$};
\draw (1.5,-1)-- (2.5,1) node[above] {$E_{12}$};
\fill (2,0) ellipse [x radius=.075cm, y radius=0.15cm];
\draw[dashed]  (1,-.75)--  node[above right=-3pt] {$\ell_1$} (1,.75);
\end{scope}

\begin{scope}[xshift=6cm,yshift=-1.5cm]
\draw (0,-1)--(3,-1)--(4,1)--(1,1)--cycle;
\node[right] at (3,-1) {$E_2\cong\mathbb{P}^2$};
%\draw (.75,0)-- node[above] {$P$} (3.25,0) node[above] {$L$};
%\node[right] at (4,1) {$S=E_1\cap E_2$};
\draw (1.5,-1) --  (2.5,1) node[below right=-3pt] {$E_{12}$} ;
\fill (2,0) ellipse [x radius=.075cm, y radius=0.15cm];
\draw[dashed]   (1,-.5)  node[left] {$\ell_2$} --(3,-.5);
\end{scope}
\end{tikzpicture}

\caption{$\pi_2\circ\pi_1$}
\label{fig:2blowups}
\end{center}
\end{figure}
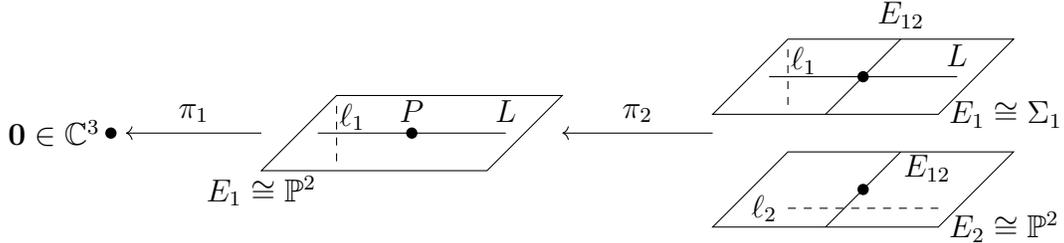

We take as $X_3\xrightarrow{\pi_3} X_2$ the blow-up along $L$.
We summarize the result:
\begin{itemize}
\item $E_3\subset X_3$ is isomorphic to $\Sigma_2$, and $\ell_3$ is a general fiber;
\item $E_2\subset X_3$ is isomorphic to the blow-up of a point in $E_2\subset X_2$, i.e., it is isomorphic to $\Sigma_1$.
The class $\ell_2$ is a general $(+1)$-section in $\Sigma_1$. Note that $E_{23}:=E_2\cap E_3$ is the
$(-1)$-section of $E_2$ and a fiber in~$E_3$;

\item $E_1\subset X_3$ is isomorphic to $\Sigma_1$ and $\ell_1$ is a general $(+1)$-section. Note that $E_{13}:=E_1\cap E_3$ is a fiber in $E_1$ and the $(-2)$-section in $E_3$.
\end{itemize}

It is possible to express $\pi:X_3\to X_0$ as another sequence
of blow-ups $\bar{\pi}_1\circ\bar{\pi}_2\circ\bar{\pi}_3$, where
$\pi_1=\bar{\pi}_1$. The map
$\bar{\pi}_2:\bar{X}_2\to\bar{X}_1=X_1$ is the blow-up
along~$L$. Hence $\bar{E}_2$ is isomorphic to~$\Sigma_2$
and $\bar{\ell}_2$ is a general fiber. The curve $\bar{E}_{12}:=E_1\cap E_2$ is the $(-2)$-section of $\bar{E}_2$
and a line in $E_1$ (which is still isomorphic to $\mathbb{P}^2$).
The preimage of the point~$P$ by $\bar{\pi}_2$ is a fiber $F$ of $\bar{E}_2$, see Figure~\ref{fig:2blowups2}.

\begin{figure}[ht]
\begin{center}
\begin{tikzpicture}[yscale=.5]
\fill (-2,0) ellipse [x radius=.075cm, y radius=0.15cm] node[left] {$\mathbf{0}\in\mathbb{C}^3$};
\draw[<-] (-1.8,0)--(0,0) node[above,pos=.5] {$\bar\pi_1$};
\draw (0,-1)--(3,-1)--(4,1)--(1,1)--cycle node[below=5pt] {$\bar E_1\cong\mathbb{P}^2$};
\draw (.75,0)-- node[above] {$P$} (3.25,0) node[above] {$L$};
\fill (2,0) ellipse [x radius=.075cm, y radius=0.15cm];
\draw[dashed]  (1,-.75) --  node[below right=-4pt] {$\bar\ell_1$} (1,.75);
\draw[<-] (4,0)--(6,0) node[above,pos=.5] {$\bar\pi_2$};

\begin{scope}[xshift=6cm,yshift=1.5cm]
\draw (0,-1)--(3,-1)--(4,1)--(1,1)--cycle;
\node[right] at (3,-1) {$\tilde E_1\cong\mathbb{P}^2$};
\draw (.75,0)-- (3.25,0) node[above] {$\bar E_{12}$};
%\node[right] at (4,1) {$S=E_1\cap E_2$};
%\draw (1.5,-1)-- (2.5,1) node[above] {$E_{12}$};
\fill (2,0) ellipse [x radius=.075cm, y radius=0.15cm];
\draw[dashed] (1,-.75) -- node[below right=-4pt] {$\bar\ell_1$} (1,.75);
\end{scope}

\begin{scope}[xshift=6cm,yshift=-1.5cm]
\draw (0,-1)--(3,-1)--(4,1)--(1,1)--cycle;
\node[right] at (3,-1) {$\bar E_2\cong\Sigma_2$};
\draw (.75,0)  node[above right] {$F$} --  (3.25,0);
%\node[right] at (4,1) {$S=E_1\cap E_2$};
\draw (1.5,-1) --  (2.5,1)  node[below right=-3pt] {$\bar E_{12}$} ;

\fill (2,0) ellipse [x radius=.075cm, y radius=0.15cm];
\draw[dashed]   (1,-.5)  node[left] {$\bar \ell_2$} --(3,-.5);
\end{scope}
\end{tikzpicture}
\caption{$\bar\pi_2\circ\bar\pi_1$}
\label{fig:2blowups2}
\end{center}
\end{figure}
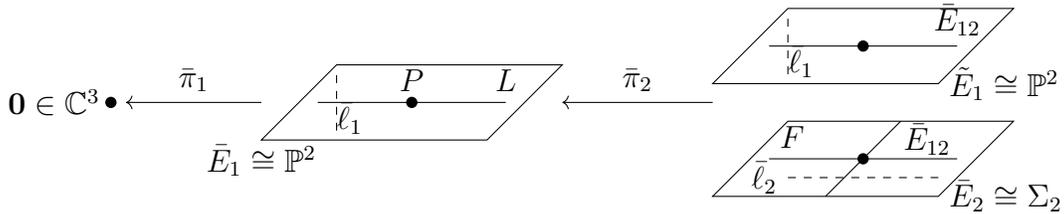

Let $\bar{\pi}_3:\bar{X}_3\to\bar{X}_2$ be the blow-up
along~$F$. It is not hard to check that $X_3=\bar{X}_3$, and that in $X_3=\bar{X}_3$ we have
$E_1=\bar{E}_1$ and $E_j=\bar{E}_k$, $\{j,k\}=\{2,3\}$.
Moreover, we have $\bar{\ell}_1=\ell_1$ and $\bar{\ell}_2=\ell_3$ (as classes) but $\ell_2$ is a general $(+1)$-section while $\bar{\ell}_3$
is a fiber of $\bar E_3=E_2\cong\Sigma_1$, see
Figure~\ref{fig:3blowup}.

\begin{figure}[ht]
\begin{center}
\begin{tikzpicture}[yscale=.5]
\draw (0,-1)--(3,-1)--(4,1)--(1,1)--cycle;
\node[below right] at (0,-1) {$\bar E_1=E_1\cong\Sigma_1$};
\draw (.75,0)  -- (3.25,0) node[below left=-3pt] {$E_{13}$};
%\node[right] at (4,1) {$S=E_1\cap E_2$};
\draw (1.5,-1)-- (2.5,1) node[above] {$E_{12}$};
\fill (2,0) ellipse [x radius=.075cm, y radius=0.15cm];
\draw[dashed]  (1,-.75)--(1,.75) node[left=4pt] {$\ell_1=\bar\ell_1$};

\begin{scope}[xshift=4cm]
\draw (0,-1)--(3,-1)--(4,1)--(1,1)--cycle;
\node[below right] at (0,-1) {$\bar E_3=E_2\cong\Sigma_1$};
\draw (.75,0)-- (3.25,0) node[above=-3pt] {$E_{12}$};
%\node[right] at (4,1) {$S=E_1\cap E_2$};
%\draw (1.5,-1)-- (2.5,1) node[above] {$E_{12}$};
\fill (2,0) ellipse [x radius=.075cm, y radius=0.15cm];
\draw (1.5,-1)-- (2.5,1) node[above] {$E_{23}%=\tilde E_{13}
$};
\draw[dashed]  (1.25,-.75)--  node[above right=-3pt] {$\ell_2$} (1.25,.75);
\draw[dashed]   (1,-.5)  node[left] {$\bar\ell_3$} --(3,-.5);
\end{scope}

\begin{scope}[xshift=8cm]
\draw (0,-1)--(3,-1)--(4,1)--(1,1)--cycle;
\node[below right] at (0,-1) {$\bar E_2=E_3\cong\Sigma_2$};
\draw (.75,0)--  (3.25,0) node[above=-3pt] {$E_{23}$};
%\node[right] at (4,1) {$S=E_1\cap E_2$};
\draw (1.5,-1)-- %node[above] {$P$}
(2.5,1) node[above] {$E_{13}$};
\fill (2,0) ellipse [x radius=.075cm, y radius=0.15cm];
\draw[dashed]   (1,-.5) --(3,-.5)  node[right=5pt] {$\ell_3=\bar\ell_2$} ;
\end{scope}
\end{tikzpicture}
\caption{$X_3=\bar{X}_3$}
\label{fig:3blowup}
\end{center}
\end{figure}

\end{example}

Let us recall the notion of \emph{higher dimensional curvettes}
introduced by the second named author.
% in~\cite{vy:07}.
% , the second named author proposed the choice of
% some (classes of) hypersurfaces $\mathcal{C}_j$, say
% , associated to the exceptional components $E_j$, that generalize the classical notion of curvettes in dimension 2.
% In particular, the following is shown in ~\cite[Proposition 3.2]{vy:07}.

\begin{prop}[{\cite[Proposition 3.2]{vy:07}}]\label{hypercurvettes}
Consider the \emph{ordered} modification $\pi$ as in \eqref{sequence}.
One can construct consecutively
for $j = 1,\dots,m$ a hypersurface $\mathcal{C}_j$ on $X_0$ with the following properties.

\begin{enumerate}[label=\rm(\arabic{enumi})]
\item The strict transform of $\mathcal{C}_j$  in $X_{j-1}$ contains $Z_j$ and  is smooth along $Z_j$.

\item Denoting by $\tilde{\mathcal{C}_i}$, $i\leq j$,  the strict transform  in $X_j$ of
% previously created
$\mathcal{C}_i$, we have that  $E_1 \cup E_2 \cup \cdots \cup E_j \cup\tilde{\mathcal{C}_1} \cup \tilde{\mathcal{C}_2} \cup \cdots \cup \tilde{\mathcal{C}_j}$ is a normal crossing divisor on $X_j$. Also, the next center of blow-up $Z_{j+1}$ has normal crossings with $\tilde{\mathcal{C}_1} \cup \tilde{\mathcal{C}_2} \cup \cdots \cup
\tilde{\mathcal{C}_j}$, and moreover $Z_{j+1}$ is not contained in this union.
(In particular, if $Z_{j+1}$ is a point, it does not belong to any $\tilde{\mathcal{C}_i}, i \leq j$.)

\item\label{hypercurvettes2} For the $\mathbb{P}^{n_j}$-fiber bundle  $\sigma_j$
 with total space
 $E_j\subset X_j$,
we have that the intersection multiplicity of $\tilde{\mathcal{C}_j}\cap E_j$  with the general line $\ell_j\subset\mathbb{P}^{n_j}$ is $1$.
(In particular, when $E_j$ is a projective space, then $\tilde{\mathcal{C}_j}\cap E_j$ is a general hyperplane in $E_j$.)

\end{enumerate}

In addition, the construction allows to associate to each $E_j$ any finite number of such $\mathcal{C}_j$, such that in
\ref{hypercurvettes2} %more generally
all exceptional components and all strict transforms together form a normal crossing divisor.

%Note that by (1) the strict transforms in $X_m$ of all $E_j$ and $C_j$ form a normal crossing divisor.

%(4) Given another hypersurface $H$ on $X_m$ having normal crossings with $E_1 \cup \cdots \cup E_m$, we can choose $C_1,\cdots,C_m$ such that furthermore $H$ and all $E_j$ and $C_j$ form a normal crossing divisor on $X_m$.
\end{prop}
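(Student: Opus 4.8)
Since this statement is \cite[Proposition~3.2]{vy:07}, I will only outline the strategy one would follow. The construction is carried out by induction on $j=1,\dots,m$, producing $\mathcal{C}_j$ one at a time. Suppose $\mathcal{C}_1,\dots,\mathcal{C}_{j-1}$ have been chosen, so that on $X_{j-1}$ the divisor $\mathcal{N}_{j-1}:=\bigcup_{i<j}E_i\cup\bigcup_{i<j}\tilde{\mathcal{C}}_i$ is a normal crossing divisor; by admissibility of~$\pi$ the center $Z_j$ lies in $\bigcup_{i<j}E_i$ and has normal crossings with it, and by property~(2) at the previous stage $Z_j$ also has normal crossings with $\bigcup_{i<j}\tilde{\mathcal{C}}_i$ (the base case $j=1$ being the choice of a general smooth hypersurface through $p$). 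Fix an ample line bundle $\mathcal{L}$ on $X_{j-1}$ (compactifying $X_0$ if needed, which does not affect the situation over $p$) and take for $\mathcal{C}_j$ the hypersurface of $X_0$ whose strict transform $D_j\subset X_{j-1}$ is a \emph{general} member of the complete linear system $|\mathcal{I}_{Z_j}\otimes\mathcal{L}^{\otimes d}|$ with $d\gg 0$. For $d$ large this system is base-point free away from $Z_j$ and separates enough jets along $Z_j$; hence, by a Bertini-type argument in characteristic zero (generic smoothness of the general member, together with generic transversality to a fixed finite family of smooth subvarieties), a general $D_j$ is irreducible, smooth, contains $Z_j$ with multiplicity one, is not contained in the exceptional locus, and is transverse to $Z_j$ and to each stratum of $\mathcal{N}_{j-1}$. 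This yields property~(1) at once, and it makes $D_j\cup\mathcal{N}_{j-1}$ a normal crossing divisor near $Z_j$.

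For property~(2) on $X_j$: blowing up a center that has normal crossings with a normal crossing divisor preserves the normal crossing property, so the strict transform of $\mathcal{N}_{j-1}$ together with $E_j$ is again a normal crossing divisor on $X_j$, and $\tilde{\mathcal{C}}_j$ (the strict transform of $D_j$) is in normal crossing position with it because $D_j$ was chosen transverse to all the relevant strata. The remaining clause of~(2) --- that the \emph{next} center $Z_{j+1}$ has normal crossings with $\tilde{\mathcal{C}}_1\cup\dots\cup\tilde{\mathcal{C}}_j$ and is not contained in it --- is the only genuinely delicate point, because $Z_{j+1},\dots,Z_m$ are prescribed in advance while the $\tilde{\mathcal{C}}_i$ are being built. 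One handles it by strengthening the induction hypothesis: each $D_i$ is chosen so general that its strict transforms in all the later blow-ups $X_k$ ($k\geq i$) remain transverse to $Z_{k+1}$ and to the strata of the exceptional divisor there. For this it is convenient that the relevant linear systems of strict transforms on each $X_k$ be base-point free (which holds once $\mathcal{L}$ is sufficiently ample, since the exceptional divisors are relatively ample for the blow-down maps), so that the Bertini/Kleiman transversality theorem can be applied on each $X_k$ to a finite list of conditions. Keeping this bookkeeping coherent through all $m$ stages is, I expect, the main obstacle in the proof.

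Property~(3) is a local computation. Near a general point $z\in Z_j$ choose coordinates $(x,y)$ on $X_{j-1}$ with $Z_j=\{y_1=\dots=y_{n_j+1}=0\}$; then $D_j=\{\sum_{i=1}^{n_j+1}a_i(x,y)\,y_i=0\}$, and for general $D_j$ the linear part $(a_1(z,0),\dots,a_{n_j+1}(z,0))$ is a generic nonzero covector. In $X_j=\operatorname{Bl}_{Z_j}X_{j-1}$ the fiber of $\sigma_j$ over $z$ is $\mathbb{P}(N_{Z_j/X_{j-1},z})\cong\mathbb{P}^{n_j}$, and $\tilde{\mathcal{C}}_j$ cuts out in it the hyperplane defined by that linear part; equivalently, $\mathcal{O}_{X_j}(-E_j)$ restricts to $\mathcal{O}(1)$ on every fiber of $\sigma_j$, so $\tilde{\mathcal{C}}_j\cap E_j$ has fiber-degree one and meets a general line $\ell_j$ in a general fiber transversally in a single point. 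When $Z_j$ is a point this says that $\tilde{\mathcal{C}}_j\cap E_j$ is a hyperplane of $E_j\cong\mathbb{P}^{n-1}$, general because the tangent hyperplane of $D_j$ at $p$ was chosen general. Finally, to attach several hypersurfaces to a given $E_j$ one simply repeats the step with finitely many general members $D_j^{(1)},D_j^{(2)},\dots$ in place of one, each transverse to the union of all hypersurfaces and exceptional components chosen before it; since all the stratifications involved are finite, a general choice keeps the total divisor normal crossing and preserves property~(3).
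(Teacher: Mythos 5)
The paper states this proposition as an imported result from \cite{vy:07} and gives no proof of its own, so there is nothing internal to compare against. Your inductive sketch --- choosing each $\mathcal{C}_j$ as the image in $X_0$ of a general member of a sufficiently ample linear system through $Z_j$, imposing Bertini-type transversality against the existing strata and the finitely many prescribed later centers, and deducing property (3) from the fact that multiplicity one along $Z_j$ forces fiber degree one on $E_j$ --- is the standard construction and is essentially the one carried out in the cited reference.
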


% There are several different choices for these classes; we fix a choice from the beginning.
% Let $\tilde{\mathcal{C}}_i:=\pi(\mathcal{C}_i)\subset X_0$.
% {\color{red} Change the notation, tilde should be for strict transforms.}

The total transforms in $X_m$ of each
 $\mathcal{C}_j$ can be expressed as a divisor as follows:
\begin{equation}\label{eq:matrix}
\pi^*\mathcal{C}_j=\tilde{\mathcal{C}_j}+\sum_{i=1}^m a_{ji} E_i,\qquad a_{ji}=\nu_i(\mathcal{C}_j).
\end{equation}

\begin{dfn}
The \emph{valuation matrix} of this system of hypercurvettes is the matrix
$A\in\mat(m;\mathbb{Z})$ of the $a_{ji}$.
\end{dfn}

This matrix is in fact precisely the new intersection matrix for $n>2$ that we mentioned in the introduction.  Let $c_i$ be a one-dimensional curvette of $E_i$, that is, a curve germ through $p$ whose strict transform $\tilde{c}_i$ in $X_m$ intersects $E_i$ transversally in a generic point.
Let $I\in\mat(m;\mathbb{Z})$ denote the  matrix consisting of the $\mathcal{C}_j \cdot c_i$.

\begin{lemma}
With the notations above, we have that $A=I$.
\end{lemma}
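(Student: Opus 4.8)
The plan is to compute both $a_{ji}=\nu_i(\mathcal{C}_j)$ and the intersection number $\mathcal{C}_j\cdot c_i$ by the same projection-formula argument, reducing everything to an intersection computation on the surface $X_m$... more precisely, on the curve $\tilde c_i$.

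\smallskip
\textbf{Step 1: reduce to the strict transform of the curvette.} Fix $i$ and $j$. By definition, $\mathcal{C}_j\cdot c_i$ is the intersection multiplicity at $p$ of the germs $\mathcal{C}_j$ and $c_i$ in $X_0$. Since $\pi$ is an isomorphism outside $p$ and both germs are supported at $p$, this equals $\pi^*\mathcal{C}_j\cdot\tilde c_i$ computed in $X_m$, where $\tilde c_i$ is the (proper, complete after taking closures, or rather: the intersection can be computed locally along $\tilde c_i\cap\pi^{-1}(p)$) strict transform of $c_i$. Using the decomposition \eqref{eq:matrix},
\[
\mathcal{C}_j\cdot c_i \;=\; \pi^*\mathcal{C}_j\cdot\tilde c_i \;=\; \tilde{\mathcal{C}_j}\cdot\tilde c_i \;+\;\sum_{k=1}^m a_{jk}\,(E_k\cdot\tilde c_i).
\]

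\smallskip
\textbf{Step 2: identify the two intersection terms.} Here I use the defining properties of the one-dimensional curvette $c_i$: its strict transform $\tilde c_i$ is a smooth curve germ meeting $E_i$ transversally at a generic point of $E_i$, and in particular meeting $E_i\setminus\bigcup_{k\neq i}E_k$. Thus $E_k\cdot\tilde c_i=0$ for $k\neq i$ and $E_i\cdot\tilde c_i=1$. Moreover, since $\tilde c_i$ passes through a generic point of $E_i$ — and by genericity we may assume it avoids $\tilde{\mathcal{C}_j}$, which by Proposition~\ref{hypercurvettes}(2) meets $E_i$ in a proper closed subset — we get $\tilde{\mathcal{C}_j}\cdot\tilde c_i=0$. (If one prefers not to invoke genericity of $c_i$ against $\tilde{\mathcal{C}_j}$, one can instead argue that $c_i$ is chosen generic among curvettes of $E_i$, or note that $\tilde{\mathcal{C}_j}\cap E_i$ is contained in the union of the other $E_k$'s together with a divisor of $E_i$; the point is that a generic point of $E_i$ lies outside $\tilde{\mathcal{C}_j}$.) Substituting gives $\mathcal{C}_j\cdot c_i = a_{ji}$, i.e. $I=A$.

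\smallskip
\textbf{Main obstacle.} The only delicate point is the genericity claim $\tilde{\mathcal{C}_j}\cdot\tilde c_i=0$ for \emph{all} pairs $i,j$ simultaneously: we need a single curvette $c_i$ of $E_i$ whose strict transform avoids $\tilde{\mathcal{C}_1},\dots,\tilde{\mathcal{C}_m}$ at once. This is harmless because each $\tilde{\mathcal{C}_j}\cap E_i$ (for those $j$ with $\tilde{\mathcal{C}_j}$ meeting $E_i$ at all) is a proper Zariski-closed subset of $E_i$, so the generic point of $E_i$ avoids the finite union, and any curvette through such a point works. One should also note that the quantities $\nu_i(\mathcal{C}_j)$ do not depend on which representative of the hypercurvette family is used — this follows since the valuation $\nu_i$ depends only on $E_i\subset X_m$ — so the statement "$A=I$" is well posed, and the equality holds for the matrix $A$ attached to the chosen system of hypercurvettes and any compatible generic choice of one-dimensional curvettes $c_1,\dots,c_m$.
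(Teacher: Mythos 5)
Your argument is exactly the paper's proof: apply the projection formula to write $\mathcal{C}_j\cdot c_i=(\pi^*\mathcal{C}_j)\cdot\tilde c_i$, expand via \eqref{eq:matrix}, and use $E_\ell\cdot\tilde c_i=\delta_{\ell i}$ together with the genericity assumption that $\tilde c_i$ avoids all $\tilde{\mathcal{C}_j}$. Your extra remarks on simultaneous genericity are fine but already covered by the paper's standing generality assumptions on curvettes.
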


\begin{proof} Note that, by definition of $c_i$, we have that  $E_\ell \cdot \tilde{c}_i = \delta_{\ell i}$. Moreover, by our generality assumptions, $\tilde{c}_i$ does not intersect any  $\tilde{\mathcal{C}_j}$. Then the projection formula yields
\[
\mathcal{C}_j \cdot c_i = \mathcal{C}_j \cdot \pi_* \tilde{c}_i = (\pi^*\mathcal{C}_j )\cdot \tilde{c}_i =( \tilde{\mathcal{C}_j}+\sum_{\ell=1}^m a_{j\ell} E_\ell)\cdot \tilde{c}_i = a_{ji}.
\qedhere
\]
\end{proof}

\begin{remark}
When $n=2$, the classical curvettes $\mathcal{C}_j$ satisfy the properties of Proposition \ref{hypercurvettes}, and for those the matrix $A$ is \lq canonical\rq; it is the inverse of minus the intersection matrix of the curves $E_1, \dots, E_m$ in $X_m$.  In particular, this matrix is symmetric.

When $n \geq 3$, the $\mathcal{C}_j$ above depend on some choices, and in particular on the sequence of blow-ups $\pi_j$ that constitute $\pi$.
There does not seem to be any canonical choice for such $\mathcal{C}_j$.
Most importantly, whatever choice is made,  in general in $X_m$ the hypersurface $\tilde{\mathcal{C}_j}$ intersects not only $E_j$, but also  components $E_i, i\neq j$, and this does not happen for $n=2$.

 In particular, also the matrix $A$ depends on choices, and it is in general not symmetric.
However, an important property of the two-dimensional setting does generalize.
\end{remark}

\begin{prop}[{\cite[Proposition 3.3]{vy:07}}]\label{matrixA}
For each $t=1,\dots,m$, let $A_t$ be the principal submatrix of $A$, formed by the first $t$ rows and columns. We have that $\det A_t=1$ for all $t$.
\end{prop}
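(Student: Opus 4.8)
The plan is to prove the statement by induction on $t$, showing that each "step" of the blow-up sequence multiplies the determinant of the relevant principal submatrix by exactly the unit $1$. The key is to understand how the $t$-th row and column of $A$ are produced by the $t$-th blow-up $\pi_t$ and its effect on the hypercurvette data.

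First I would set up the recursion. Fix $t$ and consider the penultimate stage $X_{t-1}$, on which the strict transforms $\tilde{\mathcal{C}}_1,\dots,\tilde{\mathcal{C}}_{t-1}$ and the exceptional components $E_1,\dots,E_{t-1}$ are already defined, together with the valuations $\nu_1,\dots,\nu_{t-1}$; these give a $(t-1)\times(t-1)$ matrix which, by induction, is $A_{t-1}$ with $\det A_{t-1}=1$. When we blow up the center $Z_t\subset\bigcup_{i<t}E_i$, the new valuation $\nu_t$ is determined by $Z_t$: if $Z_t$ lies in the intersection $E_{i_1}\cap\cdots\cap E_{i_k}$ of exactly the components indexed by $i_1<\dots<i_k$ (possibly $k=1$), then $\nu_t=\nu_{i_1}+\cdots+\nu_{i_k}$ (the order of vanishing along $E_t$ of a function is the sum of its orders along the components through $Z_t$, since locally $Z_t$ is a coordinate subspace). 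This gives the $t$-th row of $A$ below the diagonal as the sum of the corresponding earlier rows, plus the diagonal entry $a_{tt}$. Correspondingly, for $i<t$, the new column entry $a_{it}=\nu_t(\mathcal{C}_i)$ records the multiplicity of $\tilde{\mathcal{C}}_i$ (on $X_{t-1}$) along $E_t$, which by the same principle equals $\sum_{\ell=1}^k a_{i,i_\ell}$ if $\tilde{\mathcal{C}}_i$ passes through $Z_t$ appropriately, and in fact by property (2) of Proposition~\ref{hypercurvettes} (the center $Z_{t}$ has normal crossings with, and is not contained in, $\tilde{\mathcal{C}}_1\cup\dots\cup\tilde{\mathcal{C}}_{t-1}$) together with property (1) applied to $\mathcal{C}_t$, one controls which $\tilde{\mathcal{C}}_i$ contain $Z_t$ and with what multiplicity. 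I would then perform row and column operations on $A_t$: subtract from the last row the sum of rows $i_1,\dots,i_k$, and simultaneously adjust the last column, to reduce $\det A_t$ to $\det A_{t-1}$ times the "local" contribution of the single blow-up $\pi_t$, which is $1$ because $\tilde{\mathcal{C}}_t$ meets $\ell_t$ with multiplicity $1$ (property~\ref{hypercurvettes2}), i.e. the genuinely new diagonal-type entry equals $1$ after these reductions.

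Concretely, the cleanest route is probably to mimic the two-dimensional argument from \cite{vy:07}: express everything via the incidence of $Z_t$ with the divisor $E_1\cup\dots\cup E_{t-1}\cup\tilde{\mathcal{C}}_1\cup\dots\cup\tilde{\mathcal{C}}_{t-1}$, which is normal crossings by Proposition~\ref{hypercurvettes}(2). Choose local analytic coordinates on $X_{t-1}$ near a generic point of $Z_t$ in which the components through $Z_t$ are coordinate hyperplanes; then the pullback of a monomial and the computation of $\nu_t$ reduces to addition of exponents. This yields simultaneously the formula $a_{t,i}=\sum_\ell a_{i_\ell,i}$ for $i<t$ (via $\nu_t = \sum_\ell \nu_{i_\ell}$ applied to $\mathcal{C}_i$) and the formula $a_{i,t}=\sum_\ell a_{i,i_\ell}$ for those $\tilde{\mathcal{C}}_i$ meeting $Z_t$, while $a_{t,t}=1$ exactly because of the intersection-multiplicity-one normalization of $\tilde{\mathcal{C}}_t$ along $\ell_t$. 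Performing the row operation "$R_t \mapsto R_t - \sum_\ell R_{i_\ell}$" kills all of the last row except the diagonal entry, which becomes $1$, and expanding along that row gives $\det A_t = \det A_{t-1} = 1$.

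The main obstacle I anticipate is the bookkeeping for the last column, i.e. verifying that $a_{i,t} - \sum_\ell a_{i,i_\ell}$ vanishes (or is otherwise harmless) for all $i<t$ after the chosen row operation — in other words, that the $\tilde{\mathcal{C}}_i$ interact with the new center $Z_t$ in precisely the way dictated by the normal-crossings hypothesis, with no hidden extra contributions from $\tilde{\mathcal{C}}_i$ that happen to be "tangent" in some disguised sense. Property~(2) of Proposition~\ref{hypercurvettes} is exactly engineered to rule this out, so the argument should go through, but making the exponent bookkeeping precise (especially when $Z_t$ has positive dimension and is only generically in nice position) is where care is needed. Once the row operation is justified, the induction closes immediately and the base case $t=1$ is trivial since $A_1=(a_{11})=(1)$ by property~\ref{hypercurvettes2} applied to the first blow-up.
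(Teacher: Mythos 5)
Your overall strategy---induct on $t$, use the blow-up recursion for the valuations (Lemma~\ref{lem:inductive}) to express the new entries of $A_t$ in terms of earlier ones, and reduce $\det A_t$ to $\det A_{t-1}$ by a single elementary operation---is exactly the intended one; the paper records precisely the needed identities in Example~\ref{ejm:matrix}, equation~\eqref{eq:rels_A0}. But the step you actually commit to at the end is wrong. The identity coming from $\nu_t(\mathcal{C}_i)=\sum_{j\in D_t}\nu_j(\mathcal{C}_i)$ for $i<t$ (valid because $Z_t\not\subset\tilde{\mathcal{C}}_i$ by property (2) of Proposition~\ref{hypercurvettes}, so the correction term $t_i$ of Lemma~\ref{lem:inductive} vanishes) is a relation among the \emph{columns} of $A$: namely $a_{it}=\sum_{j\in D_t}a_{ij}$ for $i<t$ and $a_{tt}=\sum_{j\in D_t}a_{tj}+1$, with $D_t=\{j<t\mid Z_t\subset E_j\}$. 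There is no corresponding row relation---$A$ is not symmetric for $n\geq 3$---and your formula $a_{t,i}=\sum_{\ell}a_{i_\ell,i}$, allegedly obtained by ``applying $\nu_t=\sum_\ell\nu_{i_\ell}$ to $\mathcal{C}_i$'' (which in fact yields $a_{it}$, not $a_{ti}$), is false. Concretely, in the continuation of Example~\ref{ex1} one has $Z_3=L\subset E_1$ only, so your row operation is $R_3\mapsto R_3-R_1=(1,2,2)-(1,1,1)=(0,1,1)\neq(0,0,1)$: the last row is not killed and $\det A_3=\det A_2$ does not follow this way. Likewise the claim $a_{t,t}=1$ is false in general (there $a_{22}=a_{33}=2$); what equals $1$ is the multiplicity of the strict transform of $\mathcal{C}_t$ along $Z_t$, i.e.\ the excess $a_{tt}-\sum_{j\in D_t}a_{tj}$.

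The repair is small, and you essentially have the correct ingredient in hand, since the column identity $a_{i,t}=\sum_\ell a_{i,i_\ell}$ appears among your formulas and you rightly single out its verification as the crux. Perform instead the column operation $C_t\mapsto C_t-\sum_{j\in D_t}C_j$ on $A_t$: by \eqref{eq:rels_A0} the resulting last column is $(0,\dots,0,1)^{T}$, and expanding along it gives $\det A_t=\det A_{t-1}$, closing the induction (the base case $a_{11}=1$ holds since $\mathcal{C}_1$ is smooth at $p$). Note also that the ``tangency'' worry you raise about the $\tilde{\mathcal{C}}_i$ is exactly what Lemma~\ref{lem:inductive} disposes of: the correction term is the multiplicity of $\tilde{\mathcal{C}}_i$ along the \emph{generic} point of $Z_t$, so it vanishes as soon as $Z_t\not\subset\tilde{\mathcal{C}}_i$, regardless of any lower-dimensional intersections of $\tilde{\mathcal{C}}_i$ with $Z_t$.
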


\begin{example}[Continuation of  Example \ref{ex1}]
The surface $E_2$ is created as $\mathbb{P}^2$, hence for any choice of \emph{higher dimensional curvette} associated to $E_2$, its strict transform in $X_2$ intersects $E_2$ in a generic line. And then in $X_3$ it intersects $E_2\cong \Sigma_1$ in a generic $(+1)$-section.
On the other hand, the corresponding $\bar{E}_3$ in $\bar{X}_3=X_3$ is created as $\Sigma_1$. And then the intersection of $E_2=\bar{E}_3\cong\Sigma_1$ and a \emph{higher dimensional curvette} associated to $\bar{E}_3$  is some section of $\Sigma_1$ (non necessarily a $(+1)$-section).

Let us illustrate this with equations.
We use coordinates $x,y,z$ in $\mathbb{C}^3=X_0$, as well as in the various charts of $X_i, i>0$, in the usual way.
Here we fix them such that, in the chart of $X_1$ where $E_1$ is given by $z=0$, the point $P$ is the origin and the line $L$ is given by $y=z=0$. For the next $X_i$ we choose the charts
where non-trivial intersections arise.
On can verify that the following are possible choices for \emph{higher dimensional curvettes}\ associated to the modification $\pi$:
\[
\mathcal{C}_1 : a_1x+b_1y+c_1z=0,\quad  \mathcal{C}_2: a_2x+b_2y=0,\quad  \mathcal{C}_3: b_3y +c_3z^2 =0,
\]
where all coefficients $a_i,b_i,c_i$ are generic.
The associated matrix $A$ is
\[
A=
\begin{pmatrix}
1 & 1 & 1 \\
1 & 2 & 1 \\
1 & 2 & 2 \\
\end{pmatrix}.
\]
Then, for instance in the chart of $X_3$ where $E_1$, $E_2$ and $E_3$ are given by $z=0$, $x=0$ and $y=0$, respectively, the strict transforms of the $\mathcal{C}_j$ are given as
\[
\tilde{\mathcal{C}}_1 : a_1x+b_1xy+c_1=0,\quad \tilde{\mathcal{C}}_2: a_2+b_2y=0,\quad  \tilde{\mathcal{C}}_3: b_3 +c_3z =0.
\]
In particular, $\tilde{\mathcal{C}}_2 \cap E_2$ is disjoint from $E_3 \cap E_2$ (which is also true in the other charts).  This is mandatory since these curves are a $(+1)$-section and the $(-1)$-section, respectively, on $E_2\cong \Sigma_1$.

For the modification $\bar{\pi}$, let us take the same $\bar{\mathcal{C}}_1 =\mathcal{C}_1$ associated to $\bar{E}_1=E_1$ and $\bar{\mathcal{C}}_2 = \mathcal{C}_3$ associated to $\bar{E}_2=E_3$.
For $\bar{E}_3=E_2$, we now choose
\[
\bar{\mathcal{C}}_3: dy^2 +exz^2 + fxy =0,
\]
with generic coefficients $d,e,f$.
The associated matrix $\bar A$ is
\[
\bar A=
\begin{pmatrix}
1 & 1 & 1 \\
1 & 2 & 2 \\
2 & 3 & 4 \\
\end{pmatrix}.
\]
Then in the similar chart of $\bar{X}_3=X_3$,  we have that  $\bar{E}_1$, $\bar{E}_2$ and $\bar{E}_3$ are given by $z=0$, $y=0$ and $x=0$, respectively, and the strict transform of $\bar{\mathcal{C}}_3$ by $dy+ez+f =0$.
In particular, its intersection with $\bar{E}_3$ intersects $\bar{E}_2 \cap \bar{E}_3$ (transversely), and hence is not a $(+1)$-section of $\Sigma_1$.
%MAYBE ADD CONCRETE EQUATIONS FOR POSSIBLE HYPERCURVETTES
\end{example}

From now  on, we fix a choice of (classes of) $\mathcal{C}_j$ as in Proposition \ref{hypercurvettes}. For the sake of simplicity such \emph{higher dimensional curvettes}\
will be called \emph{hypercurvettes}.

% \smallskip
% In the sequel, we will use the following notation.

%\vspace{1cm}
\begin{notation}
\mbox{}
\begin{enumerate}
\item Let~$X$ be an %irreducible
algebraic variety.
%({\color{red} does variety stand for irreducible? We have to decide this and mention our convention in the beginning}).
For an open and dense affine subspace $U$ of $X$, we denote by $R[U]$ the ring of regular functions of~$U$ and by $R(U)=R(X)$ its field of rational functions. If $Z\subset X$ is an irreducible Zariski closed subset, $R_Z$ denotes the local ring of $X$ at (the generic point of) $Z$. If $Z\cap U\neq\emptyset$, then $R_Z$ can be described as the localization of $R[U]$ with respect to the prime ideal of the functions vanishing at $Z\cap U$.

\item In order not to overload notation, we will in general identify a hypersurface
and its defining regular function, in particular for hypercurvettes.
\end{enumerate}

\end{notation}

\section{Rational functions}\label{sec:rational}
\numberwithin{equation}{section}

% Let $h:(X_0,p)\dashrightarrow\mathbb{C}$ be a germ of a non-constant rational
% function, i.e., the quotient of two non-proportional regular functions.

% {\color{blue} Above was the previous sentence. Below you adapted it, but I doubt this. For instance zeroes of the denominator are a hypersurface?}

% {\color{orange} We best discuss in Madrid this and the stuff below.}
%
% {\color{red} The sentence in brackets was nonsense. Eliminated.}

Let $h:X_0\dashrightarrow\mathbb{C}$ be a non-constant rational
function whose indeterminacy locus contains $p$, i.e., the quotient of two non-proportional regular functions both vanishing
at~$p$.
% (the unique $0$ of the denominator).

%  When $\dim X_0=2$, it is possible
% to \emph{solve} the functions, i.e.,  there is a sequence
% $\pi$ of blow-ups as in \eqref{sequence} such that
% $h$ \emph{extends} to a regular morphism~$\tilde{h}$
% fitting in the following diagram.
% \[
% \begin{tikzcd}
% X_m\rar["\pi"]
% \dar["\tilde{h}"]&X_0
% \dar["h", dashrightarrow]\\
% \mathbb{P}^1\equiv\mathbb{C}\cup\{\infty\}
% \rar[hookleftarrow]&\mathbb{C}
% \end{tikzcd}
% \]
% When $\dim X_0>2$ the indeterminacy locus is larger
% and birational morphisms as in \eqref{sequence} cannot
% produce such a diagram.
%
%
% {\color{red} This paragraph should be written somewher else.}
% {\color{blue} This content is now in the introduction.}
% On the other side, in dimension~$2$ it is possible to construct
% rational functions with prescribed behavior in the exceptional components of a sequence of blow-ups (where it is well-defined).
% Again, we want to characterize when these special functions
% can be constructed.

\begin{dfn}
Let $h:(X_0,p)\dashrightarrow\mathbb{C}$ be a non-constant rational function
and let $\pi:X_m\to X$ be a sequence of blow-ups
as in \eqref{sequence}. Consider the exceptional divisor $E_i \ (\subset X_i)$, seen as a projective bundle over $Z_i$.
\begin{enumerate}[label=(\arabic*)]
 \item The divisor $E_i$ is \emph{constant} or \emph{non-dicritical}, if the pull-back of $h$, restricted to $E_i$,
 is constant (this constant
 can be a value in $\mathbb{C}\cup\{\infty\}$).
%   {\color{red} Check with the statements if we need dicritical of degree zero.}

 \item The divisor $E_i$ is \emph{dicritical} if the pullback of $h$, restricted to  $E_i$,
 is a dominant rational map. The \emph{degree}
 of $E_i$ is the intersection number of a general fiber of $(\pi^*h)|_{E_i}$ with $\ell_i$. % (which is positive).
 %{\color{red} Check with the statements if we need dicritical of degree zero.}
 Note that a dicritical can be of degree~$0$.
%  \item The divisor $E_i$ is \emph{quasi-constant} if it is either constant or dicritical of degree~$0$.
\end{enumerate}
\end{dfn}

%\begin{remark}
%The character of a divisor may depend on the sequence or blow-ups.  \color{red} I think this remark applied to a previous version of the definition and it can be safely removed.}
%\end{remark}

Let $h:X_0\dasharrow\mathbb{C}$ be a non-constant rational function. For any $i=1,\dots,m$ we can consider $\pi_i^*h$ and its divisor
\[
\divi\pi_i^*h=(\text{strict transform of} \divi h) + \sum_{j=1}^i N_j E_j
\]
in $X_i$. We put only one subindex on $N_j$, since
\[
\divi\pi_{i+1}^*h=(\text{strict transform of} \divi h) + \sum_{j=1}^{i+1} N_j E_j
\]
in $X_{i+1}$. Actually, $N_i=\nu_i(h)$. The following well-known formula for $N_{i+1}$ allows to compute all these multiplicities inductively.

\begin{lemma}\label{lem:inductive}
Let $S_i$ be the strict transform divisor part of $\divi\pi_i^*h$ in $X_i$. Let $t_i$ be the
multiplicity of $S_i$ at $Z_{i+1}$. Then,
\[
N_{i+1} = t_i + \sum_{j\in D_{i+1}} N_j,\quad\text{ with }
D_{i+1}:=\{j\in\{1,\dots,i\}\mid Z_{i+1}\subset E_j\}.
\]
\end{lemma}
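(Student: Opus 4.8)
The plan is to compute $\divi\pi_{i+1}^*h$ in $X_{i+1}$ from $\divi\pi_i^*h$ in $X_i$ by understanding how pulling back through the single blow-up $\pi_{i+1}$ with center $Z_{i+1}$ affects each divisorial component, and then reading off the coefficient of the newly created $E_{i+1}$. First I would write $\pi_i^*h = h\circ\pi_i$ and note $\pi_{i+1}^*(\pi_i^*h) = \pi_{i,0}^*h\circ\pi_{i+1} = \pi_{i+1,0}^*h$, so that $\divi\pi_{i+1}^*h = \pi_{i+1}^*(\divi\pi_i^*h)$ as divisors on $X_{i+1}$. Hence it suffices to compute $\nu_{i+1}$ applied to the divisor $\divi\pi_i^*h = S_i + \sum_{j=1}^i N_j E_j$, since $N_{i+1}$ is by definition (the statement just above the lemma) the coefficient $\nu_{i+1}(h)$ of $E_{i+1}$ in $\divi\pi_{i+1}^*h$, and this coefficient is additive over the summands of the divisor.

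Next I would treat the two kinds of summands separately. For the strict-transform part $S_i$: since $\pi_{i+1}$ is the blow-up with smooth center $Z_{i+1}$ having normal crossings with the exceptional locus, the multiplicity $\nu_{i+1}(S_i)$ equals the multiplicity of $S_i$ along $Z_{i+1}$, i.e. the order of vanishing of a local equation of $S_i$ at the generic point of $Z_{i+1}$; this is exactly $t_i$ by definition of $t_i$ as the multiplicity of $S_i$ at $Z_{i+1}$. (In local coordinates adapted to the normal-crossings situation, a blow-up of a smooth center raises the valuation along the new exceptional divisor by the order of the function along that center.) For each exceptional summand $N_j E_j$: the contribution to $\nu_{i+1}$ is $N_j\,\nu_{i+1}(E_j)$, and $\nu_{i+1}(E_j)$, the multiplicity of (a local equation of) $E_j$ along $Z_{i+1}$, is $1$ if $Z_{i+1}\subset E_j$ and $0$ otherwise — because $E_j$ is a smooth divisor, so its local equation vanishes to order exactly $1$ along any $Z_{i+1}$ contained in it and to order $0$ along any $Z_{i+1}$ not contained in it. This is precisely the indicator that $j\in D_{i+1}$. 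Summing, $N_{i+1} = \nu_{i+1}(S_i) + \sum_{j=1}^i N_j\,\nu_{i+1}(E_j) = t_i + \sum_{j\in D_{i+1}} N_j$, as claimed.

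The main obstacle, such as it is, is the bookkeeping of \emph{strict transform} versus \emph{total transform} and making sure the normal-crossings hypotheses are used correctly: one must check that under $\pi_{i+1}$ the strict transform of $S_i$ does not absorb extra copies of $E_{i+1}$ beyond those accounted for by $t_i$, and that the $E_j$ ($j\le i$) genuinely appear with multiplicity $0$ or $1$ in $\pi_{i+1}^*E_j$ — both of which follow from smoothness of $Z_{i+1}$, smoothness of the $E_j$, and the normal-crossings assumption built into the sequence \eqref{sequence}. I would phrase this cleanly by working in a local coordinate chart at a general point of $Z_{i+1}$ in which $Z_{i+1} = \{x_1 = \cdots = x_c = 0\}$ and each $E_j$ containing $Z_{i+1}$ is one of the coordinate hyperplanes; then a local equation of $\pi_i^*h$ factors as a product of the relevant $x_k$'s to the powers $N_j$ times a local equation of $S_i$ of order $t_i$ in the ideal of $Z_{i+1}$, and the standard computation of the pullback of $x_1\cdots x_c$-type monomials and of a function of order $t_i$ under the blow-up of $\{x_1=\cdots=x_c=0\}$ gives the exponent $t_i + \sum_{j\in D_{i+1}} N_j$ on $E_{i+1}$ directly. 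Since the result is called well-known, I would keep the proof to this short coordinate verification rather than belabor it.
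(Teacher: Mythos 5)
Your proof is correct. The paper states this lemma as well-known and gives no proof at all, so there is no argument to compare against; your reasoning---pulling back $\divi\pi_i^*h$ through the single blow-up $\pi_{i+1}$, using that the coefficient of $E_{i+1}$ in the pullback of a divisor is its multiplicity along $Z_{i+1}$, which is additive over the summands, equals $t_i$ on $S_i$, and equals $1$ or $0$ on each smooth $E_j$ according to whether $Z_{i+1}\subset E_j$---is exactly the standard argument the authors are invoking, and your attention to the normal-crossings hypothesis is the right place to put the care.
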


\begin{example}\label{ejm:matrix}
We can deduce the upper triangular part of the matrix $A$ inductively, see~\cite[Proposition 3.3]{vy:07}, starting from $a_{11}=1$.  Let $k\in\{2,\dots,m\}$; then
\begin{equation}\label{eq:rels_A0}
a_{ik}=\sum_{j\in D_{k}} a_{ij},\quad 1\leq i<k,\qquad\text{ and }
a_{kk}=\sum_{j\in D_{k}} a_{kj} +1.
\end{equation}
In fact,
% denoting by $\nu_j$ the valuation associated to $E_j$, we have that
\[
a_{ik}=\nu_k(\mathcal{C}_i)=t_i + \sum_{j\in D_{k}} \nu_j(\mathcal{C}_i)= t_i+\sum_{j\in D_{k}} a_{ij},
\]
where $t_i$ is the multiplicity of the strict transform of $\mathcal{C}_i$ at $Z_k$:
if $i<k$, then $t_i=0$, and if $i=k$, then $t_i=1$.
\end{example}

\begin{example}\label{ejm:dicritical}
This example reflects some features which do not appear
in the previous ones, in particular the existence
of dicritical components of degree~$0$.
We consider a sequence of blow-ups $\pi:X_2\to X_0=\mathbb{C}^3$ defined as follows. As it is compulsory, $\pi_1:X_1\to X_0$ is the blow-up of the origin and hence $E_1\cong\mathbb{P}^2$. Let $C_d\subset E_1$ be a smooth curve of degree~$d$ and let $\pi_2:X_2\to X_1$ be the blow-up along~$C_d$.

In $X_2$, we have that $E_1$ is still~$\mathbb{P}^2$, while $E_2$ is a ruled surface
with base the smooth curve $C_d$ of genus~$\frac{(d-1)(d-2)}{2}$. Let
$E_{12}:=E_1\cap E_2$; as a curve in $E_1$ it is a copy of $C_d$,
and as a curve in $E_2$ it is a section with self-intersection $-d(d+1)$.
Recall that $\ell_1$ is a general line in $E_1$ (intersecting $C_d$ at $d$ points) and $\ell_2$ is a fiber of the ruled surface $E_2$.

\smallskip
Let now $h$ be a rational function on $X_0=\mathbb{C}^3$
and let us study what happens in a neighbourhood
of $p=0$. We can write
$h=\frac{f_0}{f_\infty}$, where $f_0$ and $f_\infty$ are polynomials. Assume that
\[
f_0=P_k+\sum_{i>k} P_i,\quad f_\infty=Q_k+\sum_{i>k} Q_i \quad
\text{(decomposition in homogeneous parts),}
\]
for some  $P_k,Q_k$ of degree $k\geq 1$. Denote $G:=\gcd(P_k,Q_k)$.

\begin{enumerate}[label=\rm(\arabic{enumi})]
\item\label{case251}  If $C_d \nmid G$, then $E_1$ is a dicritical of degree~$k$ for~$h$.
If furthermore $C_d$ is a component of a curve
in the pencil generated by $P_k,Q_k$, %(and it is not a base component),
then $E_2$ is constant for~$h$. If this is not the case, then $E_2$ is a dicritical of degree~$0$ for~$h$.

\item  If $C_d \mid G$, more precisely $C_d^\ell\mid G$ but $C_d^{\ell +1} \nmid G$, then $E_1$ is a dicritical of degree~$k-\ell d$ for~$h$.
Concerning $E_2$, there is a similar distinction as in case \ref{case251}, with respect to the pencil generated by $\frac{P_k}{C_d^\ell},\frac{Q_k}{C_d^\ell}$.
\end{enumerate}

\end{example}

%\section{Rational functions}

%Example~\ref{ejm:dicritical} shows a big difference with the $2$-dimensional case, where it is possible to find rational functions for which only one component is dicritical.
% ADAPT THIS:
% Another main difference is that we cannot expect to \emph{solve} the rational function, i.e., that the function
% is completely solved, but we may expect to reach an optimal situation.
% Example~\ref{ejm:dicritical} also shows the different nature of $0$-dicritical components.

\smallskip
We recall our main goal, and derive some easy partial result.  In the next sections, we develop techniques to achieve the goal in complete generality.

\begin{goal}\label{goal}
Given a sequence of blow-ups $\pi:X_m\to X$ as in \eqref{sequence}, a tuple $(E_{i_1},\dots,E_{i_r})$ of exceptional
components and \emph{positive} degrees~$(d_{i_1},\dots,d_{i_r})$, construct a rational function $h:(X_0,p)\dasharrow\mathbb{C}$ such that the  dicritical components of $h$ are precisely $(E_{i_1},\dots,E_{i_r})$
with degrees~$(d_{i_1},\dots,d_{i_r})$.
\end{goal}

% \begin{remark}
% There are some differences with the $2$-dimensional case. In this case, it is less clear the meaning
% of $\pi$ to be a resolution of~$h$, in particular if more dicritical could be found. Moreover, it seems
% that the valuation of such an~$h$ is not unique.
% \end{remark}

\begin{notation}\label{notation:ri}
%(1) In general we will identify the hypersurface $\mathcal{C}_i\subset X_j$, $j\geq i$, with an equation of $\mathcal{C}_i$.
%(2)
The notation $\mathcal{C}_i^{(r_i)}$, $r_i\in\mathbb{Z}$, will stand %in general
 for a product of factors $\mathcal{C}_{i,\ell}^{\varepsilon_\ell}$, $\varepsilon_\ell\in \mathbb{Z}$, with $r_i=\sum_\ell\varepsilon_\ell$, for different
and generic hypercurvettes of the same exceptional component $E_i$.
\end{notation}

Recall that we assume normal crossings for the union of all hypercurvettes and all exceptional components.
The verification of the following properties is straightforward.

\begin{lemma}\label{lem:numer_dic}
Let $h:=\prod_{j=1}^m \mathcal{C}_j^{(r_j)}$. Let $N_i=\sum_{j=1}^m r_j a_{ji}\in \mathbb{Z}$ denote the multiplicity of $\pi^*(h)$ along~$E_i$.
\begin{enumerate}[label=\rm(\arabic*)]
\item If $N_i\neq 0$, then $E_i$ is not a dicritical for $h$.
\item If $N_i=0$ and $r_i\neq 0$, then $E_i$ is a dicritical for $h$ of degree~$\geq 1$.
\item\label{lem:numer_dic3} If $N_i=0$, $r_i= 0$, and $\mathcal{C}_i^{(r_i)}$ is a non-constant function, then $E_i$ is a dicritical for $h$ of degree~$\geq 1$.
\end{enumerate}
\end{lemma}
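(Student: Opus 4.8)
The overall strategy is to analyze the restriction of $\pi^*h$ to each $E_i$ directly, using the divisor decomposition $\pi^*(h)=\sum_j r_j\,\pi^*\mathcal{C}_j=\sum_j r_j\,\tilde{\mathcal{C}_j}+\sum_i N_i E_i$ coming from \eqref{eq:matrix}, together with the normal crossings hypothesis and the geometric description in Proposition~\ref{hypercurvettes}\ref{hypercurvettes2}. First I would fix $E_i$ and choose a general line $\ell_i$ in a general fiber of $\sigma_i$; the degree of the dicritical, when it exists, is measured against this $\ell_i$, and "non-dicritical" means $(\pi^*h)|_{E_i}$ is constant, so it suffices to understand the rational function $(\pi^*h)|_{\ell_i}$, or rather the order of vanishing/poles of $\pi^*h$ along $\ell_i$ at the various points where $\ell_i$ meets the total transform of $\divi h$.

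For part~(1): along $E_i$ the function $\pi^*h$ has order exactly $N_i\neq 0$, so near the generic point of $E_i$ we can write $\pi^*h = u\cdot e_i^{N_i}$ where $e_i$ is a local equation of $E_i$ and $u$ is a unit there. Restricting to $E_i$, the factor $e_i^{N_i}$ forces $(\pi^*h)|_{E_i}$ to be identically $0$ (if $N_i>0$) or identically $\infty$ (if $N_i<0$); hence $E_i$ is non-dicritical. For part~(2): when $N_i=0$, the restriction $(\pi^*h)|_{E_i}$ is a well-defined nonzero rational function on $E_i$ (not identically $0$ or $\infty$), and I must show it is non-constant when $r_i\neq 0$. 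The point is that among the divisors $\tilde{\mathcal{C}_j}$ appearing in $\divi\pi^*h$, the strict transforms $\tilde{\mathcal{C}_i}$ (those attached to $E_i$ itself) cut out on $E_i$ a divisor whose intersection with $\ell_i$ is nonzero: by Proposition~\ref{hypercurvettes}\ref{hypercurvettes2} each $\tilde{\mathcal{C}_{i,\ell}}\cap E_i$ meets $\ell_i$ in exactly one point, and by genericity these points are distinct and distinct from the points where the other $\tilde{\mathcal{C}_j}$, $j\neq i$, or the other exceptional components meet $\ell_i$. Computing $\divi\bigl((\pi^*h)|_{\ell_i}\bigr)$ as $(\divi\pi^*h)\cdot \ell_i$ (legitimate because of the normal crossings, so $\ell_i$ is not contained in any component of the total transform of $\divi h$ except we must note it lies in $E_i$ — and that contributes $N_i=0$), the contribution of $\sum_\ell \varepsilon_\ell\,\tilde{\mathcal{C}_{i,\ell}}$ to this degree-zero divisor on $\ell_i\cong\mathbb{P}^1$ is a sum of terms $\varepsilon_\ell$ at distinct points; since these are "isolated" among all the other contributions (which sit at other points or are multiples of other $E_k$, $k\neq i$, meeting $\ell_i$ transversally in yet other points), the divisor $(\pi^*h)|_{\ell_i}$ on $\mathbb{P}^1$ is nonzero — it has a genuine zero or pole of order $\varepsilon_\ell$ at each point $\tilde{\mathcal{C}_{i,\ell}}\cap\ell_i$ — so $(\pi^*h)|_{\ell_i}$, hence $(\pi^*h)|_{E_i}$, is non-constant. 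Part~(3) is identical: if $r_i=0$ but $\mathcal{C}_i^{(r_i)}=\prod_\ell \mathcal{C}_{i,\ell}^{\varepsilon_\ell}$ is non-constant then not all $\varepsilon_\ell$ vanish, so again some point $\tilde{\mathcal{C}_{i,\ell}}\cap\ell_i$ carries a nonzero order $\varepsilon_\ell$ — the only difference with~(2) being that now $\sum_\ell\varepsilon_\ell=0$, which is irrelevant to the argument.

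The main obstacle — and the only subtle point — is making rigorous the claim that the zeros/poles of $(\pi^*h)|_{\ell_i}$ coming from the $\tilde{\mathcal{C}_{i,\ell}}$ genuinely survive and are not cancelled. This requires being careful that $\ell_i$, chosen general in a general fiber, meets each relevant divisor transversally at points that are pairwise distinct across all the data ($\tilde{\mathcal{C}_{j,\ell}}$ for all $j,\ell$, and $E_k\cap E_i$ for all $k\neq i$); this is exactly what the normal crossings hypothesis plus genericity of $\ell_i$ and of the hypercurvettes buys us, and it is the content of "the verification of the following properties is straightforward." One should also record that $\ell_i$ can be chosen to avoid the (codimension $\geq 1$) indeterminacy locus of $\pi^*h$ within $E_i$, so that $(\pi^*h)|_{\ell_i}$ is a bona fide rational function on $\mathbb{P}^1$; this again follows from generality of $\ell_i$. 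With these transversality/genericity statements in hand, the three assertions follow immediately from the local form of $\pi^*h$ and the degree computation on $\mathbb{P}^1$.
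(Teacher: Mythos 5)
Your proof is correct and is essentially the intended argument: the paper omits the verification entirely (declaring it ``straightforward''), and your three steps --- the local form $u\cdot e_i^{N_i}$ for part (1), and the computation of $\divi\bigl((\pi^*h)|_{\ell_i}\bigr)$ via transversality of $\ell_i$ with the normal crossing configuration for parts (2) and (3) --- are exactly the natural way to fill it in. You also correctly identify and handle the one genuine subtlety, namely that the zeros/poles at the points $\tilde{\mathcal{C}}_{i,\ell}\cap\ell_i$ cannot be cancelled because genericity keeps all relevant intersection points on $\ell_i$ pairwise distinct.
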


%We skip the proof since it is straightforward.
As a less ambitious part of the goal, we first show that, given $\pi$ as in \eqref{sequence} and a fixed $j\in\{1,\dots,m\}$,
we can find a rational function~$h$ such that its only dicritical is $E_j$ (with positive degree), but without further control on that degree.
 In fact, we show simultaneously a similar result for any subset $J\subset \{1,\dots,m\}$.

\begin{prop}\label{prop:one-dic}
For any nonempty $J\subset\{1,\dots,m\}$, there exists a rational function~$h$ for which
$E_j$ is dicritical of some positive degree if $j\in J$, and $E_j$ is not
dicritical if $j\notin J$.
\end{prop}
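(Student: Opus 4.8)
The strategy is to build $h$ as a product $\prod_{j} \mathcal{C}_j^{(r_j)}$ of powers of hypercurvettes and apply Lemma~\ref{lem:numer_dic}. By that lemma, $E_j$ is guaranteed to be dicritical (of some positive degree) whenever the multiplicity $N_j = \sum_i r_i a_{ij}$ vanishes together with $r_j \neq 0$; and $E_j$ is guaranteed to be non-dicritical whenever $N_j \neq 0$. So the whole problem reduces to a linear-algebra question over $\mathbb{Z}$: find an integer vector $\mathbf{r} = (r_1,\dots,r_m)$ such that, writing $\mathbf{N} = A^{\mathsf{T}}\mathbf{r}$ (with $A$ the valuation matrix), we have $N_j = 0$ and $r_j \neq 0$ for $j \in J$, and $N_j \neq 0$ for $j \notin J$. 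The key input is Proposition~\ref{matrixA}: every leading principal submatrix $A_t$ has determinant $1$, so $A$ is invertible over $\mathbb{Z}$ and we have complete freedom to prescribe $\mathbf{N}$ and solve $\mathbf{r} = (A^{\mathsf{T}})^{-1}\mathbf{N}$.

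Concretely, I would proceed as follows. First, choose the target multiplicity vector $\mathbf{N}$: set $N_j = 0$ for all $j \in J$, and $N_j = $ some cleverly chosen nonzero integer for $j \notin J$. Solve for $\mathbf{r}$; since $A$ is unimodular this $\mathbf{r}$ is an integer vector, and since $h = \prod \mathcal{C}_j^{(r_j)}$ with the $\mathcal{C}_j$ all distinct and generic, $h$ is a genuine non-constant rational function (not a constant — one should note $\mathbf{r} \neq 0$ because $\mathbf{N} \neq 0$, which holds as soon as $J \neq \{1,\dots,m\}$; the case $J = \{1,\dots,m\}$ needs a separate tiny argument, e.g. take $\mathbf{N} = 0$ and $\mathbf r$ any nonzero vector with all $r_j\neq 0$, noting $A^{\mathsf T}\mathbf r$ will then generally not be zero — so instead one picks $\mathbf N=0$ forces $\mathbf r=0$; better: for $J=\{1,\dots,m\}$ use a product of hypercurvettes of a single component so that $N_i=0$ for all $i$ is impossible, hence one instead arranges $r_j\neq 0$ for all $j$ and $N_j=0$ for all $j$ which forces $\mathbf r=0$ — a contradiction, so this extreme case must be handled by a slightly different device, such as using the $\varepsilon_\ell$ freedom within a single $\mathcal{C}_j^{(r_j)}$ with $r_j = 0$ and invoking Lemma~\ref{lem:numer_dic}\ref{lem:numer_dic3}).

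The main obstacle is precisely ensuring the non-vanishing conditions simultaneously: we need $N_j \neq 0$ for \emph{every} $j \notin J$, and $r_j \neq 0$ for \emph{every} $j \in J$. The first is easy to arrange by choosing $\mathbf{N}$ generically among vectors supported off $J$. The second is more delicate: after solving $\mathbf{r} = (A^{\mathsf T})^{-1}\mathbf{N}$, some coordinates $r_j$ with $j \in J$ might accidentally vanish, in which case $E_j$ is not detected as dicritical by parts (2)–(3) of Lemma~\ref{lem:numer_dic} in the naive way. The fix is to add to $\mathbf{N}$ a small correction supported on $\{1,\dots,m\}\setminus J$, or to exploit part~\ref{lem:numer_dic3} of the lemma: even when $r_j = 0$, if the factor $\mathcal{C}_j^{(0)}$ is taken to be an honestly non-constant ratio $\mathcal{C}_{j,1}/\mathcal{C}_{j,2}$ of two generic distinct hypercurvettes of $E_j$, then $E_j$ is still dicritical. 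Thus the final construction is: pick $\mathbf N$ with $\mathrm{supp}(\mathbf N) \subset \{1,\dots,m\}\setminus J$ and generic nonzero off $J$; solve for the integer vector $\mathbf r$; and for each $j \in J$, realize $\mathcal{C}_j^{(r_j)}$ using at least two distinct generic hypercurvettes of $E_j$ so that it is non-constant regardless of whether $r_j$ happens to be zero. Invoking Lemma~\ref{lem:numer_dic} then finishes the proof. One final check: the genericity/normal-crossing hypotheses of Notation~\ref{notation:ri} and Lemma~\ref{lem:numer_dic} are met because Proposition~\ref{hypercurvettes} allows attaching arbitrarily many generic hypercurvettes to each $E_j$ with all strict transforms plus exceptional components in normal crossing.
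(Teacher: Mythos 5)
Your proposal is correct and follows essentially the same route as the paper: write $h=\prod_j\mathcal{C}_j^{(r_j)}$, use the unimodularity of the valuation matrix $A$ (Proposition~\ref{matrixA}) to prescribe $N_j=0$ for $j\in J$ and $N_j\neq 0$ otherwise, and invoke Lemma~\ref{lem:numer_dic}, with part~\ref{lem:numer_dic3} covering the case $r_j=0$ by realizing $\mathcal{C}_j^{(0)}$ as a non-constant ratio of generic hypercurvettes. Your digression on $J=\{1,\dots,m\}$ is more convoluted than necessary, but it lands on exactly the device the paper uses ("for any value of $r_i$ we can choose $\mathcal{C}_i^{(r_i)}$ non-constant").
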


\begin{proof}
We take $h$ of the form $h:=\prod_{j=1}^m \mathcal{C}_j^{(r_j)}$ where $r_1,\dots,r_m$ are unknowns. Let $N_i:=\sum_{j=1}^m r_j a_{ji}\in \mathbb{Z}$ be the multiplicity of $\pi^*(h)$ along~$E_i$. Then
\[
\begin{pmatrix}
r_1&\dots&r_m
\end{pmatrix}
A=
\begin{pmatrix}
N_1&\dots&N_m
\end{pmatrix}.
\]
Since the matrix~$A$ is unimodular, the choice of $N_j=0$ for $j\in J$ and $N_j\neq 0$ for $j\notin J$
yields a solution for $r_1,\dots,r_m$. For any value of $r_i$
we can choose $\mathcal{C}_i^{(r_i)}$ non constant
and the result follows.
\end{proof}

% {\color{red} Another section with less straightforward techniques. Construction of special hypersurfaces?}

\section{Construction of special hypersurfaces}
\label{sec:special}

% \begin{remark}
% {\color{red} Rewrite.}
%As a difference with the two-dimensional case,
Proposition~\ref{prop:one-dic}
provides no control on the degree of the dicritical component.
% With the above techniques we cannot ensure the existence of functions
% with arbitrary specified degrees~$d_j$.
This is why we need to introduce another class of
hypersurfaces of~$X_0$
called \emph{special hypersurfaces}.

% Besides hypercurvettes we need some other special hypersurfaces.

\begin{lemma}\label{lem:special_hypersurface}
Fix  $s\in\{1,\dots,m\}$ and some $E_j, 1\leq j<s,$  that
intersect $E_s$ positively, i.e., it intersects the general fiber
of the projective space bundle $E_s\subset X_s$, and let $\ell\in\mathbb{Z}_{\geq 1}$. Then there exists a \emph{special hypersurface} $\mathscr{H}_j$ in $X_0$
such that its strict transform $\tilde{\mathscr{H}}_j$ in $X_s$ satisfies:
\begin{enumerate}[label=\rm(\alph{enumi})]
\item $\tilde{\mathscr{H}}_j\cap E_s$ is the union of $E_s\cap E_j$
and possibly fibers of $E_s \to Z_s$,
\item\label{lem:special_hypersurface_b} the intersection multiplicity of $\tilde{\mathscr{H}}_j$ and $E_j$
along $E_s\cap E_j$  equals $1$
%$\tilde{\mathscr{H}}_j$ generically intersects transversally at $E_j$,
(in particular, $\tilde{\mathscr{H}}_j$ is generically smooth at $E_s\cap E_j$),
% ({\color{red} look at the proof.})
\item\label{lem:special_hypersurface_c} the intersection multiplicity of $\tilde{\mathscr{H}}_j$ and $E_s$
along $E_s\cap E_j$  equals $\ell$.
\end{enumerate}
\end{lemma}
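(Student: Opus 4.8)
The plan is to construct $\mathscr{H}_j$ by specifying its strict transform in $X_s$ and then checking it descends to a hypersurface on $X_0$ with the required properties. First I would work locally on $X_s$ in a chart adapted to the projective bundle structure $\sigma_s \colon E_s \to Z_s$. In such a chart, $E_s$ is given by a coordinate $u=0$, the component $E_j$ (which by hypothesis meets the general fiber of $E_s$) is given by another coordinate $v=0$, and a fiber of $E_s \to Z_s$ is cut out by the remaining ``base'' coordinate(s). Since $E_j$ meets the general fiber positively, $E_s \cap E_j$ is a divisor in $E_s$ of fiber-relative degree $1$, so locally $E_s\cap E_j = \{u=v=0\}$. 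The natural candidate is a hypersurface whose equation, near $E_s\cap E_j$, looks like $v \cdot (\text{unit}) + u^{\ell}\cdot(\text{something}) = 0$ after passing through the blow-ups; concretely, one builds a regular function on $X_0$ whose strict transform in $X_s$ has local equation of the form $v + u^{\ell} w = 0$, with $w$ a generic regular function vanishing appropriately on the relevant exceptional components.

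The key steps, in order, are: (i) fix charts of $X_s$ covering $E_s\cap E_j$ and the generic point of $E_s$, and write down, in each, the desired local model $\{v + u^\ell w = 0\}$ for $\tilde{\mathscr{H}}_j$, where $w$ is chosen so that the total transform glues; (ii) push this divisor down through $\pi_{s,0}$ and verify that there is a hypersurface $\mathscr{H}_j$ on $X_0$ — equivalently, a regular function near $p$ — whose strict transform is exactly this divisor, which amounts to checking the multiplicities $\nu_i(\mathscr{H}_j)$ are consistent, i.e.\ that the prescribed strict transform together with the appropriate multiples of $E_1,\dots,E_s$ is the total transform of something on $X_0$ (this is a cohomological/existence statement, handled exactly as in the construction of hypercurvettes in Proposition~\ref{hypercurvettes}, by iterated generic choice); (iii) compute the two intersection multiplicities. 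For (b): along $E_s\cap E_j=\{u=v=0\}$, the component $E_j$ is $\{v=0\}$ and $\tilde{\mathscr{H}}_j$ is $\{v+u^\ell w=0\}$; restricting the latter equation to $E_j$ gives $u^\ell w=0$, but we must intersect $\tilde{\mathscr{H}}_j$ with $E_j$ as divisors and read the multiplicity of the component $E_s\cap E_j$ in that intersection, which is the $v$-order of $v+u^\ell w$, namely $1$. For (c): symmetrically, restricting to $E_s=\{u=0\}$ gives $v=0$ set-theoretically, but the intersection multiplicity of $\tilde{\mathscr H}_j$ and $E_s$ along $E_s\cap E_j$ is the $u$-order of $v+u^\ell w$ along $\{v=0\}$ inside $E_s$; since on $E_s$ the function $v+u^\ell w$ restricts to $u^\ell w$ and $w$ is a unit generically along $E_s\cap E_j$, this order is $\ell$. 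Finally, (a) follows because off $E_s\cap E_j$ the equation $v+u^\ell w$ has no further zeros on $E_s$ except where $w$ vanishes, and the genericity of $w$ forces those to be (at most) whole fibers of $E_s\to Z_s$; one also checks the normal crossing condition with the other $E_i$ and with future centers, again by the genericity built into the construction.

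The main obstacle I expect is step (ii): showing that the locally-prescribed strict transform in $X_s$ actually comes from a genuine hypersurface (regular function) on $X_0$, with the stated local model surviving in every relevant chart. This is not a local statement — one must run an inductive generic-choice argument down the tower $X_s\to X_{s-1}\to\cdots\to X_0$, at each stage choosing the next lower-level equation so that the strict transform has the right multiplicity at the center $Z_i$ and keeps normal crossings, mirroring the proof of Proposition~\ref{hypercurvettes}. The power $u^\ell$ (rather than $u$) is what makes $\mathscr{H}_j$ ``special'' as opposed to a hypercurvette, and one has to be careful that this higher tangency order to $E_s$ does not destroy smoothness along earlier centers — which is why $\ell$ appears only in the direction transverse to $E_s$ and the rest of the equation is kept generic. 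The intersection-multiplicity computations in (b) and (c) are then routine once the local model is in hand.
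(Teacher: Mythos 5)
There is a genuine error in your local model, and it is exactly a transposition of the two multiplicities the lemma asks for. With your conventions ($u=0$ cutting $E_s$, $v=0$ cutting $E_j$), the hypersurface $\{v+u^\ell w=0\}$ meets $E_j=\{v=0\}$ in the divisor $\{u^\ell w=0\}$, which contains $E_{s,j}=\{u=v=0\}$ with multiplicity $\ell$, and it meets $E_s=\{u=0\}$ in $\{v=0\}$, i.e.\ $E_{s,j}$ with multiplicity $1$. That is the opposite of (b) and (c): the intersection multiplicity along $E_{s,j}$ is the colength of the ideal generated by the two local equations, and $(v,\,v+u^\ell w)=(v,u^\ell)$ has colength $\ell$ (so (b) fails for $\ell>1$), while $(u,\,v+u^\ell w)=(u,v)$ has colength $1$ (so (c) fails). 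Your justification for (b) --- reading off ``the $v$-order of $v+u^\ell w$'' --- is not a computation of the multiplicity of $E_{s,j}$ in the cycle $\tilde{\mathscr{H}}_j\cdot E_j$; one must restrict the equation of $\tilde{\mathscr{H}}_j$ to $E_j$ and take the order of vanishing along $E_{s,j}$, which in your model is $\ell$. The correct local model has the tangency in the other direction: $\tilde{\mathscr{H}}_j$ must osculate $E_s$ to order $\ell$, i.e.\ be given by $v^\ell-\lambda u=0$ (so $\ell$ appears transverse to $E_j$, not transverse to $E_s$ as you wrote). This also matters downstream: the whole point of $\mathscr{H}_j$ is that $\tilde{\mathscr{H}}_j\cap E_s$ contains $E_{s,j}$ with multiplicity $\ell$, which your model does not provide.

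Your worry about step (ii) (descending a locally prescribed divisor on $X_s$ to a hypersurface on $X_0$) is also misplaced, and the paper avoids it entirely: the hypersurface is produced by an explicit global equation one level down, namely $f_1^\ell=\lambda f_d^{\ell+1}$ on an affine chart $U$ of $X_{s-1}$, where $f_1,\dots,f_d$ is a regular system of parameters of $R_{Z_s}$ with $f_1$ cutting $E_j$ and $f_d$ generic. Its multiplicity along $Z_s$ is $\ell$, so its strict transform under the single blow-up $\pi_s$ is $w_1^\ell-\lambda w_d=0$ with $w_1=f_1/f_d$, $w_d=f_d$ --- note the exponent $\ell+1$ on $f_d$ is what makes the $w_d$-term survive linearly. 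Then $\mathscr{H}_j$ is simply the closure of the image of this irreducible hypersurface under the proper birational morphism down to $X_0$; its strict transform in $X_s$ automatically recovers the constructed divisor, and irreducibility is a Bertini argument in $\lambda$. No inductive generic-choice descent through the tower is needed.
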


\begin{remark}
Why are such $\mathscr{H}_j$  needed? In the construction of rational functions~$h$ we will use at least previous hypercurvettes.
 If we want to control the degree
of  $h|_{E_s}$, we may need to compensate the contribution
of such positively intersecting $E_j$ in $E_s$ by the use of these special
hypersurfaces. This will be clear in the proof of Proposition \ref{prop3}.
\end{remark}

\begin{proof}[Proof of Lemma{\rm~\ref{lem:special_hypersurface}}]
Let us consider the blow-up with center~$Z_s$ yielding $X_s$:
\[
\begin{tikzcd}
X_s\ar[r,hookleftarrow]\ar[d,"\pi_s"]&
E_s\ar[d,"\sigma_s"]\ar[r,hookleftarrow]&
\mathbb{P}^{n_s}\\
X_{s-1}\ar[r,hookleftarrow]&Z_s.&
\end{tikzcd}
\]
The hypothesis on the positive intersection implies that $Z_s\subset E_j\subset X_{s-1}$; note that $n_s:=\codim Z_s - 1\geq 1$ and that $Z_s$ is smooth in $X_{s-1}$. Since this ambient space is smooth, the local ring
$R_{Z_s}$ is a regular local ring of dimension~$d:=n_s+1$  with residue field $R(Z_s)$.
% the field of rational
% functions of $Z_s$.

Let us fix a Zariski open affine subset $U$ of $X_{s-1}$ such that $Z_s\cap U \neq \emptyset$.
% Let $R[U]$ denote the ring of regular functions of $U$.
The previous assertion implies that
$R[U]\subset R_{Z_s}$.
% , and $R_{Z_s}$ is the localization of $R[U]$
% along the prime ideal of the functions vanishing on $Z_s\cap U$.
Let $f_1,\dots,f_d$ be a regular system of parameters of~$R_{Z_s}$.
Getting rid of eventual denominators, we assume that $f_1,\dots,f_d\in R[U]$. We can moreover choose $f_1$ such that $f_1=0$
is an equation of $E_j$, and we can choose $f_d$ \emph{generic}.

This last element $f_d$ defines an affine chart
$V\subset\pi_s^{-1}(U)$ in $X_s$ such that
\[
R[V]=R[U][w_1,\dots,w_{d}],\qquad\text{where } w_i=\frac{f_i}{f_d}\text{ if }i<d,
\text{ and } w_d=f_d.
\]
The blow-up  $\pi_s:X_s\to X_{s-1}$ restricts to a map $V\to U$ which induces
the above inclusion $R[U]\subset R[V]$. Let $E_{s,j}$ denote the intersection $E_s\cap E_j$.
This inclusion lifts to a map
\[
\begin{tikzcd}[row sep=0pt]
R_{Z_s}\rar& R_{E_{s,j}}:&\\
f_i\rar[mapsto]& w_i w_d&\quad \text{for } i<d,\\
f_d\rar[mapsto]& w_d.&\\
\end{tikzcd}
\]
Note that $R[V]\subset R_{E_{s,j}}$, $\dim R_{E_{s,j}}=2$, and $w_1,w_d$ form a regular system of parameters of $R_{E_{s,j}}$.
For any $\lambda\in\mathbb{C}^*$ the equation $f_1^\ell=\lambda f_d^{\ell+1}$ defines a hypersurface in $U$, whose strict transform
in $V$
is given by $w_1^\ell=\lambda w_d$. By Bertini's argument we may assume that for generic $\lambda$ these hypersurfaces
are irreducible.

% {\color{red} Introduce the notation outside the proof. This paragraph should not be here, maybe after the proof as a remark.}

We denote by $\tilde{\mathscr{H}}_{s-1,j}$ the Zariski closure in $X_{s-1}$ of the hypersurface given by $f_1^\ell=\lambda f_d^{\ell+1}$ in $U$,
and by $\tilde{\mathscr{H}}_{s,j}$ its strict transform in $X_s$, which is also the Zariski closure of the hypersurface given by $w_1^\ell=\lambda w_d$, in $V$. As a consequence
these closures are irreducible and reduced, as it is the case for
$\mathscr{H}_{j}:=\pi_s(\tilde{\mathscr{H}}_{s,j})\subset X_0$, see~\cite[Chapter~II, Exercices 4.4 and 3.11]{har}.

Note that $\tilde{\mathscr{H}}_{s,j}\cap E_s$ may contain
\emph{vertical} extra components of the form $p^{-1}(B)$, where $B$ is
 a hypersurface in $Z_s$.
\end{proof}

\begin{remark}
More generally, we could replace $\lambda f_d^{\ell+1}$ with a generic $\tilde{f}\in R[U]$, such that $\tilde{f}\in\mathfrak{M}_{Z_s}^{\ell + 1}\setminus\mathfrak{M}_{Z_s}^{\ell + 2}$
and $f_1$ does not divide $\tilde{f}$ in $R_{Z_s}$. Here and in the sequel the notation  $\mathfrak{M}_{C}$ stands for the maximal ideal
of the regular local ring associated to a \emph{point} $C$ in $X_{s-1}$ or $X_s$ (maybe non-closed).
The strict transform in $V$ of the hypersurface in $U$ defined by $f_1^\ell=\tilde{f}$ is then of the form
$w_1^\ell=uw_d $, where $u$ is a unit in $R_{E_{s, j}}$.
\end{remark}

%\begin{proof}[Alternative proof of  Lemma{\rm~\ref{lem:special_hypersurface}}] {\color{red} TO  DO (HOPEFULLY ...), BY PROVING SURJECTIVITY BETWEEN GLOBAL SECTIONS OF APPROPRIATE SHEAVES. We should decide at some point if we know how to do it.}
%
%Assume first $Z_s$ is a point $p$. Consider a very ample sheaf $\mathcal{L}$ on $X_{s-1}$.
%This property ensures the existence of a non-zero section $s\in H^0(X_{s-1};\mathcal{L})$
%such that $p$ is a smooth point of its zero locus and $E_{s-i}$ is tangent to it at~$p$
%(and the divisor at $p$ is not $E_{s-i}$) (this solves the previous case for $\ell=1$, it could be generalized).
%\end{proof}

\begin{example}\label{ejm:special}
We  construct an example of  a special hypersurface.
Let $\pi:=\pi_2\circ\pi_1$, where $\pi_1$ is the blow-up of $\mathbf{0}\in\mathbb{C}^3$, and $\pi_2$ is the blow-up
along a smooth conic in $E_1\cong\mathbb{P}^2$, say the one with equation $x z - y^2=0$ in homogeneous coordinates.

A special hypersurface for $E_2\cap E_1$ on $E_2$ is obtained using an equation as
\[
0=\mathscr{H}_1(x,y,z):=(x z -y^2)^2 + f_5(x,y,z)+\text{higher degree terms},
\]
where $f_5(x,y,z)$ is a \emph{generic} homogeneous polynomial of degree~5.
The intersection of this hypersurface with $E_1$ is the conic, while the
intersection with $E_2$ is the conic and $10$ generic fibers of the ruled surface $E_2$.
\end{example}

\begin{notation}
If no confusion is expected, we denote for simplicity $\tilde{\mathscr{H}}_{j}$ for both $\tilde{\mathscr{H}}_{s-1,j}$ and $\tilde{\mathscr{H}}_{s,j}$.

\end{notation}

The statement of Lemma~\ref{lem:special_hypersurface} is not enough for our purposes, since
we do not control what happens at non-generic points of $E_{s,j}=E_j\cap E_s$ in $X_s$ and we need to consider \emph{very}
special hypersurfaces.
The algebraic meaning of~\ref{lem:special_hypersurface_b} and~\ref{lem:special_hypersurface_c}
in that lemma is that
%We have that $R_{\mathscr{H}_{j}}$ is a regular local ring of dimension~$1$ contained in $R_{E_{s,j}}$, regular local ring of dimension~$2$.
\[
% \mathfrak{I}_{E_{s,j}}(\tilde{\mathscr{H}}_j) \subset \mathfrak{M}_{E_{s,j}}  \setminus \mathfrak{M}_{E_{s,j}}^2
\mathfrak{I}_{E_{s,j}}(w_1,\tilde{\mathscr{H}}_j) = \mathfrak{M}_{E_{s,j}}  %\setminus \mathfrak{M}_{E_{s,j}}^2
\text{ and }
% \mathfrak{I}_{E_{s,j}}(E_s) + \mathfrak{I}_{E_{s,j}}(\tilde{\mathscr{H}}_{s,j}) = \mathfrak{I}_{E_{s,j}}(E_s) + \mathfrak{I}_{E_{s,j}}(E_j)^\ell,
\mathfrak{I}_{E_{s,j}}(w_d, \tilde{\mathscr{H}}_{j}) = \mathfrak{I}_{E_{s,j}}(w_d,w_1^\ell),
\]
respectively, where $\mathfrak{I}_W(\cdot)$ denotes the ideal generated by the elements between brackets in the local ring at a subvariety $W$.
These conditions are generic, but there are (closed and non-closed) points in $E_{s,j}$ which do not satisfy it.
% {\color{red} Change bad by good and copy the above formulas for $p$, change bad later.}
Let
\[
\good_{E_{s,j}}(\tilde{\mathscr{H}}_j):=
\{p \in E_{j, s}\mid
% \mathfrak{I}_{p}(\tilde{\mathscr{H}}_j)\subset \mathfrak{M}_p^2
\mathfrak{I}_{p}(w_1,\tilde{\mathscr{H}}_j) = \mathfrak{M}_{p}\text{ and }
\mathfrak{I}_{p}(w_d, \tilde{\mathscr{H}}_{j}) = \mathfrak{I}_{p}(w_d,w_1^\ell)
% \mathfrak{I}_{p}(E_s) +
% \mathfrak{I}_{p}(\tilde{\mathscr{H}}_{s,j}) \subsetneqq \mathfrak{I}_{p}(E_s) + \mathfrak{I}_{p}(E_j)^\ell
\}
\]
denote the set of points (closed or not) in $E_{s,j}$ which do satisfy the similar conditions in their local ring.

\begin{lemma} Let $s$, $E_j$ and $\ell$ be as in Lemma{\rm~\ref{lem:special_hypersurface}}.
Let $p_1,\dots,p_r\in E_{s,j}$
and let $q_1,\dots,q_k\in Z_s$; they may be closed or not. Then there is a special hypersurface $\mathscr{H}_j$ as in Lemma{\rm~\ref{lem:special_hypersurface}}
such that $p_1,\dots,p_r,\sigma_s^*(q_1),\dots,\sigma_s^*(q_k)\in \good_{E_{s,j}}(\tilde{\mathscr{H}}_j)$.
% where $p:E_s\to Z_s$ is the projective bundle induced by the blow-up along~$Z_s$.
\end{lemma}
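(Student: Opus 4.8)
The strategy is to refine the Bertini-type construction from the proof of Lemma~\ref{lem:special_hypersurface} so that finitely many prescribed (closed or non-closed) points are forced into the good locus. The key observation is that the defining equation $f_1^\ell = \tilde f$ (with $\tilde f$ generic in $\mathfrak{M}_{Z_s}^{\ell+1}\setminus\mathfrak{M}_{Z_s}^{\ell+2}$, not divisible by $f_1$, as in the Remark after Lemma~\ref{lem:special_hypersurface}) gives, after blowing up, a strict transform locally of the form $w_1^\ell = u w_d$ with $u$ a unit at the generic point of $E_{s,j}$. The two conditions defining $\good_{E_{s,j}}$ translate, at a point $p\in E_{s,j}$, into: (i) $u$ is a unit in the local ring $R_p$, and (ii) $\tilde{\mathscr{H}}_j$ is smooth at $p$ transverse to $E_j=\{w_1=0\}$, i.e.\ $\mathfrak{I}_p(w_1,\tilde{\mathscr{H}}_j)=\mathfrak{M}_p$. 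Both are \emph{open} conditions on $p$ once $\tilde f$ is fixed, and the point of the lemma is that one can choose $\tilde f$ to make a given finite list of points satisfy them. I would argue this by exhibiting, for each prescribed point, an explicit linear subspace of choices of $\tilde f$ (or rather of the coefficients of the generic $\tilde f$) that works, and then intersecting these finitely many conditions.

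First I would set up notation exactly as in the proof of Lemma~\ref{lem:special_hypersurface}: fix the regular system of parameters $f_1,\dots,f_d$ of $R_{Z_s}$ with $f_1$ an equation of $E_j$ and $f_d$ generic cutting out the chart $V$, so that $R[V]=R[U][w_1,\dots,w_d]$ with $w_i=f_i/f_d$ ($i<d$), $w_d=f_d$. Then $w_1,w_d$ is a regular system of parameters of $R_{E_{s,j}}$ and the $w_i$ ($1<i<d$) restrict to coordinates on (an open subset of) the fiber direction. Each prescribed $p_a\in E_{s,j}$ corresponds to a prime of $R[V]$ containing $(w_1,w_d)$; concretely it is determined by a point (closed or not) in the $(w_2,\dots,w_{d-1})$-space over $Z_s$. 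Similarly each $\sigma_s^*(q_b)$ is the fiber $\sigma_s^{-1}(q_b)$, whose generic point in $V$ is the prime $(w_1)$ together with the ideal of $q_b$ pulled back; but note $w_d=f_d$ is a unit there, so condition (i) on $u$ at $\sigma_s^*(q_b)$ is automatically about the value of $u$ along that whole fiber. The task is: choose $\tilde f\in\mathfrak{M}_{Z_s}^{\ell+1}\setminus\mathfrak{M}_{Z_s}^{\ell+2}$, $f_1\nmid\tilde f$, such that $u:=\tilde f/(f_d^{\ell+1})=\tilde f/w_d^{\ell+1}$ is a unit at each $p_a$ and at each $\sigma_s^*(q_b)$, and such that the strict transform is smooth and transverse to $\{w_1=0\}$ at each $p_a$.

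The main technical step — and the expected obstacle — is handling the \emph{non-closed} points and, for the $q_b$, the fact that we are imposing a condition along an entire fiber rather than at a point. I would handle this by a dimension/genericity count in the finite-dimensional space of polynomials: write $\tilde f = \sum c_\alpha f^\alpha$ ranging over monomials of degree exactly $\ell+1$ in $f_1,\dots,f_d$ (plus higher-order terms which are harmless), and observe that $u$ along the fiber over a point $q$ is governed by the polynomial $\sum_{|\alpha|=\ell+1} c_\alpha \prod_{i<d} w_i^{\alpha_i}$ evaluated modulo the ideal of $q$ — a nonzero polynomial in $w_1,\dots,w_{d-1}$ with coefficients in the residue field $\kappa(q)$ for generic $c$. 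The condition "$u$ is a unit at $p_a$" and "$u$ is a unit at $\sigma_s^*(q_b)$" each fail only on a proper closed subset of the affine space $\{(c_\alpha)\}$ (this uses $f_1\nmid\tilde f$ and the monomial structure to see that the relevant leading form does not identically vanish on $\{w_1=0\}$ or on the fiber); likewise the transversality/smoothness conditions at the $p_a$ cut out proper closed subsets. Since there are only finitely many prescribed points and a proper closed subset of an irreducible affine space cannot cover it, a generic choice of $(c_\alpha)$ satisfies all conditions simultaneously, and by the Remark after Lemma~\ref{lem:special_hypersurface} such a choice still yields a special hypersurface $\mathscr{H}_j$ as in that lemma. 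The care needed with non-closed points is only that one works over the appropriate residue field rather than $\mathbb{C}$, but the avoidance-of-proper-closed-subsets argument is insensitive to the base field; I would make this precise by noting that each bad locus is defined by the non-vanishing of an explicit resultant/discriminant-type polynomial in the $c_\alpha$ with coefficients depending on the chosen $f_i$, hence is a genuine proper subvariety of $\mathbb{A}^{\#\alpha}_{\mathbb{C}}$.
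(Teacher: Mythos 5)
Your overall strategy (force the finitely many conditions by taking a generic member of an explicit finite-dimensional family of defining equations) is the same in spirit as the paper's, but the family you vary over is too small, and this is a genuine gap rather than a cosmetic one. You keep the equation in the form $f_1^\ell=\tilde f$ and let only $\tilde f\in\mathfrak{M}_{Z_s}^{\ell+1}$ vary, with $f_1$ inherited from the proof of Lemma~\ref{lem:special_hypersurface}. But there $f_1$ is only a generator of $\mathfrak{I}_{Z_s}(E_j)$ in the local ring at the \emph{generic} point of $Z_s$: after clearing denominators, $\divi(f_1)$ may contain components other than $E_j$ passing through a special point $\hat p=\pi_s(p_a)$, so that $f_1=v\cdot e_j$ with $e_j$ a local generator of $\mathfrak{I}_{\hat p}(E_j)$ and $v(\hat p)=0$. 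Then the strict transform of $f_1^\ell-\tilde f$ at $p_a$ is $v^\ell w_1^\ell-u\,w_d$ with $w_1$ the genuine local equation of $E_j$ at $p_a$, and the intersection multiplicity with $E_s$ along $E_{s,j}$ at $p_a$ exceeds $\ell$ for \emph{every} choice of the coefficients $c_\alpha$: the offending factor $v^\ell$ multiplies the degree-$\ell$ part and cannot be absorbed into $\tilde f$, since $f_1^\ell-e_j^\ell=(v-1)e_j\cdot(\cdots)$ need not lie in $\mathfrak{M}_{Z_s}^{\ell+1}$ when $v$ is non-constant along $Z_s$. Your genericity count correctly handles the unit condition on $u$ (the coefficient of $w_d$), but not this second coefficient, which is exactly the crux of the lemma --- the prescribed points are ``special'' precisely because generic local data such as ``$f_1$ generates $\mathfrak{I}(E_j)$'' can fail there. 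The paper's proof deals with this head on: for each prescribed point it picks a generator $f_{\hat p}\in R[U]$ of $\mathfrak{I}_{\hat p}(E_j)$ (hence also of $\mathfrak{I}_{Z_s}(E_j)$), forms $F_{\tilde p}=w_{\tilde p}^\ell-\mu_{\hat p}w_d$, and takes a generic linear combination $\sum_{\tilde p}\lambda_{\tilde p}F_{\tilde p}$, whose coefficients of $w_{\tilde p_0}^\ell$ and of $w_d$ are units at all prescribed points simultaneously for generic $\lambda$. Your argument could be repaired by first replacing $f_1$ with a single function that generates $\mathfrak{I}_{\hat p}(E_j)$ locally at each of the finitely many relevant points (possible, e.g.\ because an invertible ideal over a semilocal ring is principal), but as written this step is absent.

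Two smaller inaccuracies: $w_d=f_d$ is \emph{not} a unit at the generic point of the fiber $\sigma_s^{-1}(q_b)$, since $w_d$ cuts out $E_s$, which contains every fiber (the generic point of that fiber involves the prime $(w_d)$, not $(w_1)$); and you do not verify that the prescribed points actually lie in the chart $V$ (the paper ensures this by choosing the affine chart $U\subset X_{s-1}$ to contain all images of the prescribed points and $f_d$ generically with respect to them).
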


\begin{proof}
We will refine the proof of Lemma~\ref{lem:special_hypersurface}. Let us denote  $\hat{p_i}:=\pi_s(p_i)$ for $i=1,\dots,r$. Since $X_{s-1}$ is quasi-projective,
we can choose an affine chart~$U$ of $X_{s-1}$
such that
$\hat{p}_1,\dots,\hat{p}_r,q_1,\dots,q_k\in U$.

Let $\hat{p}$ be any one of these points. We have the inclusions
\[
R[U]\subset R_{\hat{p}}\subset R_{Z_s}\subset R_{E_j}\subset R(X_{s-1}) = R(X_0).
\]
The affine chart $V$ of $X_s$ depends on the choice of $f_d$. For any such choice $\sigma_s^*(q_1),\dots,\sigma_s^*(q_k)\in V$,
and for generic choices we have that $p_1,\dots,p_r\in V$. Let us fix  one of
these $k+r$ points, denote it as $\tilde{p}$, and let $\hat{p}\in Z_s$ be the corresponding point.

Let $f_{\hat{p}}$  be a generator of $\mathfrak{I}_{\hat{p}}(E_j)$ (it replaces $f_1$ in the proof of Lemma~\ref{lem:special_hypersurface})
which belongs to $R[U]$; in particular it is also a generator of  $\mathfrak{I}_{Z_s}(E_j)$.
The special hypersurface $\tilde{\mathscr{H}}_{j,\hat{p}}\subset X_{s-1}$ is defined in $U$ by $f_{\hat{p}}^{\ell} - \mu_{\hat{p}} f_d^{\ell+1}$, with
$\mu_{\hat{p}}$  generic in $\mathbb{C}^*$,
and its strict transform $\tilde{\mathscr{H}}_{j,\tilde{p}}\subset X_s$ is defined in $V$ by  $F_{\tilde{p}}:=w_{\tilde{p}}^{\ell} - \mu_{\hat{p}} w_d$, where
$w_{\tilde{p}}:=\frac{f_{\hat{p}}}{f_d}$ and $w_d:=f_d$.

We have another chain of inclusions
\[
R[V]\subset R_{\tilde{p}}\subset R_{E_{s,j}}\subset R_{{\tilde{\mathscr{H}}_{j,\tilde{p}}}}\subset R(X_{s}) = R(X_0).
\]
The above choices guarantee that $w_{\tilde{p}}$ is in $R[V]$ and is a generator of $\mathfrak{I}_{\tilde{p}}(E_j)$, and also a generator
of $\mathfrak{I}_{E_{s,j}}(E_j)$. The goal of this construction was to guarantee that $\tilde{p}\in\good_{E_{s,j}}(\tilde{\mathscr{H}}_{j,\tilde{p}})$. Indeed,
the coefficients of $w_{\tilde{p}}^{\ell}$ and $w_d$ in $F_{\tilde{p}}$ are units in $R_{\tilde{p}}$.

Let $\vecl\in\mathbb{C}^{r+k}$ with coordinates $\lambda_{\tilde{p}}$.
For a fixed $\tilde{p}_0$, we have for all $\tilde{p}$ that $w_{\tilde{p}}=u_{\tilde{p},\tilde{p}_0}w_{\tilde{p}_0}$,
where $u_{\tilde{p},\tilde{p}_0}$ is a unit in $R_{E_{s,j}}$, and $u_{\tilde{p}_0,\tilde{p}_0}=1$.
Let
\[
F_\vecl:=\sum_{\tilde{p}} \lambda_{\tilde{p}} F_{\tilde{p}}=
\left(\sum_{\tilde{p}} \lambda_{\tilde{p}} u_{\tilde{p},\tilde{p}_0}^\ell\right) w_{\tilde{p}_0}^\ell -
\left(\sum_{\tilde{p}} \lambda_{\tilde{p}} \mu_{\tilde{p}}\right) w_d.
\]
This function of $R[V]$ defines a special hypersurface $\tilde{\mathscr{H}}_{j,\vecl}\subset X_s$.
For generic values of $\vecl$, we have that $\tilde{p}_0\in\good_{E_{s,j}}(\tilde{\mathscr{H}}_{j,\vecl})$, since the conditions on the coefficients
are fulfilled. Hence, we can choose $\vecl$ such that  $\tilde{p}\in\good_{E_{s,j}}(\tilde{\mathscr{H}}_{j,\vecl})$ for all~$\tilde{p}$
and the desired special hypersurface is constructed.
\end{proof}

\begin{remark}\label{choice points}
The application of this lemma will be the following one. We will take as points $p_1,\dots,p_r\in E_{s,j}$ the
non-empty intersections of the images
of \lq later\rq\ centers of blow-ups with $E_{s,j}$, and as points $q_1,\dots,q_k\in Z_s$ the non-empty intersections of the images
of such centers of blow-ups with $Z_s$.
\end{remark}

%\section{One-dicritical functions}

\section{Functions with prescribed dicritical conditions}
\label{sec:one_dic}

This section contains the technical results that refine
Proposition~\ref{prop:one-dic} in order to reach Goal~\ref{goal}.  First, in Proposition \ref{prop3} below,
we essentially prove the existence of a rational function for which
the only dicritical component is the last one, for any prescribed
degree.

Note that, in order to realize the full goal, it is not enough to apply Proposition \ref{prop3} for each partial
$\pi_{s,1}$ (for which $E_s$ is the last component),
since in that proposition there is a priori no control on what happens
with the components $E_{s+1},\dots,E_m$.  For that we will need a more involved strategy, resulting in the more general Theorem \ref{thm4}.
% The following result is the first step in order to produce rational functions having exactly one dicritical among $E_1,\dots,E_m$ and with prescribed degree. This first
% step basically deals with the case where the dicritical is the last one.

\begin{prop}\label{prop3}
Let $s\in\{1,\dots,m\}$ and $d\geq 1$.
There exists  a rational function $h$ for which $E_i,i<s,$ is not dicritical,
and $E_s$ is dicritical of degree~$d$.
% Moreover, this function~$h$ can be constructed such that zero locus
% is composed by $E_1,\dots,E_{s-1}$, and hypercurvettes of $E_1,\dots,E_{s}$ and the pole locus
% is composed by special hypersurfaces and a hypercurvette of $E_s$.
\end{prop}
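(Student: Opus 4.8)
The plan is to start from the function constructed in Proposition~\ref{prop:one-dic} applied to the singleton $J=\{s\}$ (or more simply, to produce directly a rational function of the form $h_0:=\prod_{j=1}^s \mathcal{C}_j^{(r_j)}$ using hypercurvettes of $E_1,\dots,E_s$), chosen so that $N_i=\nu_i(h_0)=0$ for $i\le s$ and $r_s\neq 0$. By Lemma~\ref{lem:numer_dic}(2) this forces $E_s$ to be dicritical of some positive degree $d_0\geq 1$, while all $E_i$ with $i<s$ that receive $N_i=0$ could a priori also be dicritical; so actually I first want $N_i=0$ only for $i=s$ and $N_i\neq 0$ for $i<s$, i.e.\ $h_1:=\prod_{j}\mathcal{C}_j^{(r_j)}$ with $(r_1,\dots,r_s)A_s=(0,\dots,0,N_s)$ having $N_s=0$ forced by unimodularity to give a nonzero solution vector; here $A_s$ is the principal submatrix from Proposition~\ref{matrixA}, which is unimodular, so such $r_j$ exist and $r_s\neq 0$. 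This gives a rational function with $E_i$ non-dicritical for $i<s$ and $E_s$ dicritical of some degree $d_0\geq 1$ which we do not yet control.

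The second step is to adjust the degree. The degree of $E_s$ is governed by the generic behaviour of $(\pi^*h)|_{E_s}$ along the ruling class $\ell_s$, equivalently by the intersection multiplicities with $\ell_s$ of the zero and pole hypersurfaces of $h$ restricted to $E_s$. Multiplying $h_1$ by hypercurvettes $\mathcal{C}_s$ of $E_s$ changes $N_s$ (since $a_{ss}=1$ by \eqref{eq:rels_A0}), so to keep $N_s=0$ while changing the degree I instead multiply by quotients that are ``neutral'' on all $E_i$, $i\le s$, but non-trivial on $\ell_s$. This is exactly the role of the special hypersurfaces $\mathscr{H}_j$ of Lemma~\ref{lem:special_hypersurface} (and its refinement): for $E_j$ with $j<s$ intersecting $E_s$ positively, $\tilde{\mathscr{H}}_j\cap E_s$ consists of $E_s\cap E_j$ plus fibers of $E_s\to Z_s$, and the fibers are precisely what moves the degree along $\ell_s$ while the $E_s\cap E_j$ part is vertical for the ruling. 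So the plan is: write $h_1$ as a ratio of products of hypercurvettes, identify its degree contribution $d_0$ along $\ell_s$, and then multiply numerator or denominator by suitable powers of hypercurvettes of $E_s$ together with compensating special hypersurfaces $\mathscr{H}_j^{\pm 1}$ so that the net valuation along every $E_i$, $i\le s$, stays at the prescribed value ($\neq 0$ for $i<s$, $=0$ for $i=s$) while the generic intersection number with $\ell_s$ becomes exactly $d$.

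Concretely, I expect to set up a small linear bookkeeping: each $\mathcal{C}_j$ and each $\mathscr{H}_j$ contributes a known vector of valuations $(\nu_1,\dots,\nu_s)$ and a known intersection number with $\ell_s$; I want an integer combination of these (with the hypercurvettes of $E_s$ allowed freely) realizing the target valuation vector and target degree $d$. Unimodularity of $A_s$ guarantees the valuation part is solvable, and then the one extra degree-of-freedom coming from $\mathscr{H}_j$ (whose fiber-count $\ell$ in Lemma~\ref{lem:special_hypersurface} is a free positive integer, and whose valuation along $E_s$ can be read off as $\ell$ times something) lets me hit any target degree $d\ge 1$, after possibly also using hypercurvettes of the other $E_i$, $i<s$, whose nonzero $N_i$ we are free to choose. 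The generality hypotheses (normal crossings for all hypercurvettes and special hypersurfaces together, the refined Lemma putting all relevant centers into $\good$) ensure that the only dicritical produced is $E_s$ and that no unintended tangencies along $\ell_s$ inflate or deflate the degree.

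The main obstacle I anticipate is precisely controlling the degree, i.e.\ the intersection number of a general fiber of $(\pi^*h)|_{E_s}$ with $\ell_s$, rather than merely the valuations. Valuations are linear and governed by the unimodular matrix $A_s$, so the ``non-dicritical for $i<s$, $N_s=0$'' part is easy; the delicate point is that after restricting to $E_s$ one is looking at a genuine pencil on the (possibly blown-up) variety $E_s$, and one must verify that the extra vertical components (the $E_s\cap E_j$ and the fibers $\sigma_s^{-1}(B)$) contribute to the degree in the expected, additive way, with no base points along $\ell_s$ absorbing multiplicity. This is where the generality of the hypercurvettes and the $\good$-set condition of the refined special-hypersurface lemma are essential, and where the bulk of the verification (tracking each factor's restriction to a general $\ell_s$ and summing) will go; the combinatorial/linear-algebra part that makes the numbers come out is comparatively routine once that geometric input is in place.
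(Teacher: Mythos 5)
Your first step---arranging $N_i\neq 0$ for $i<s$ and $N_s=0$ via the unimodular principal submatrix of $A$---is exactly the paper's starting point and is fine. The gap is in the degree-control step, where your geometric picture is inverted. A general $\ell_s$ lies in a \emph{general fiber} of the bundle $\sigma_s:E_s\to Z_s$, so the ``vertical'' components $\sigma_s^{-1}(B)$ of $\tilde{\mathscr{H}}_j\cap E_s$ are disjoint from $\ell_s$ and contribute nothing to the degree; it is the components $E_j\cap E_s$ that meet $\ell_s$ positively (that is precisely the hypothesis on $E_j$ in Lemma~\ref{lem:special_hypersurface}). So the special hypersurfaces cannot ``move the degree along $\ell_s$'' through their fibers, and they are not valuation-neutral either, since $\nu_i(\mathscr{H}_j)=b_{ji}>0$. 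Their actual role is the opposite of what you propose: they \emph{cancel} the unavoidable contributions of the horizontal components $E_j\cap E_s$ to the divisor of $(\pi^*h)|_{E_s}$, via the conditions $N_j=r'_j\ell_j$.

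The missing device is the factor $(\mathcal{C}_s')^d/(\mathcal{C}_s'')^d$, a quotient of $d$-th powers of two \emph{distinct} generic hypercurvettes of $E_s$. This is the object that is valuation-neutral on every $E_i$ (both factors have the same valuation vector) yet non-trivial on $\ell_s$: by Proposition~\ref{hypercurvettes}\ref{hypercurvettes2}, each of $\mathcal{C}_s'\cap E_s$ and $\mathcal{C}_s''\cap E_s$ meets $\ell_s$ with multiplicity $1$, so this factor contributes exactly $d$ zeros and $d$ poles on a general $\ell_s$. The paper's candidate is
\[
h=\frac{(\mathcal{C}_s')^d}{(\mathcal{C}_s'')^d}\cdot\frac{\prod_{i=1}^{s-1}\mathcal{C}_i^{(r_i)}}{\prod_{j=s-k}^{s-1}\mathscr{H}_j^{r'_j}},
\]
for which $\divi (\pi^*h)|_{E_s}=d(\mathcal{C}_s'\cap E_s)-d(\mathcal{C}_s''\cap E_s)+\sum_{j}(N_j-r'_j\ell_j)(E_j\cap E_s)+\sigma_s^*B$; once $N_j=r'_j\ell_j$ for all $j$ in the second class, the degree is exactly $d$, and the remaining linear system in the $r_i,r'_j$ is solvable because it reduces to one governed by the unimodular $A_{s-1}$ (the $s$-th equation being a consequence of the preceding $k$). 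As a side point, your justification ``$a_{ss}=1$ by \eqref{eq:rels_A0}'' is false for $s>1$: that relation gives $a_{ss}=\sum_{j\in D_s}a_{sj}+1>1$.
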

\begin{proof}
We can assume $s=m$, since we do not deal with $E_{s+1},\dots,E_m$.
We start with some further notation.
We break the set of components $\{E_1,\dots,E_{s-1}\}$ into two classes. A divisor $E_i$
is in the first class if and only if it does not contain the center which produces $E_s$.

For the sake of simplicity, we assume that the first class contains exactly the first divisors
$E_1,\dots,E_{s-k-1}$; this is not necessarily true but notations are simpler and it
does not affect the arguments in any way.
Hence we will be mainly interested in what happens with $E_{s-k},\dots,E_{s-1}$.
Since we will need the special hypersurfaces of Lemma~\ref{lem:special_hypersurface},
we choose $\ell_j\in\mathbb{Z}_{>0}$ for $s-k\leq j < s$.

Actually this proof
will work with the choice $\ell_j=1$, but we admit some freedom of choice since for other results
we may need to take large $\ell_j$'s.

We need more choices. For each $j\in\{1,\dots,s-1\}$,
%We assume these choices have been consistly done for $1,\dots,j-1$.
we choose generic $E_j$-hypercurvettes $\mathcal{C}_{j,i}$, for some $i$'s. Next, we choose two generic $E_s$-hypercurvettes $\mathcal{C}'_s,\mathcal{C}''_s$.
So all the varieties  $\mathcal{C}_{j,i}$, all $E_i \  (1\leq i \leq s)$ and $\mathcal{C}'_s,\mathcal{C}''_s$ form a normal crossing divisor in $X_s$.
% Below the symbol $\mathcal{C}_{j}^{r_j}$ comes from Notation~\ref{notation:ri}.
% Let $E_j$, $j<s$, such that either $E_j\cap E_s\neq\emptyset$
% (if $E_s$ is a projective space) or $E_j\cap E_s$
% is a section (if $E_s$ is a projective bundle), i.e.
% $E_j\cap E_s$ intersects once the generic fibers of $E_s$.
Finally, for $j\in\{s-k,\dots,s-1\}$, let us consider a special hypersurface
$\mathscr{H}_{j}$ as in
Lemma~\ref{lem:special_hypersurface}, still with the normal crossing restriction.
% Then we consider a \emph{special} hypersurface $\mathcal{H}_j$
% such that if $\tilde{H}_j$ is the strict transform, then
% $\mathcal{H}_j\cap E_s$ is the union of $E_j\cap E_s$
% and eventually some fibers (empty if $E_s$ is a projective space).
% The intersection of $\mathcal{H}_j$ and $E_s$ may have contact greater than~$1$.

%
%
% Let us check first that
% $\gcd V=1$ where $V=(a_{1,m},\dots,a_{m-1,m})$. Recall that the principal submatrix $A_{m-1}$ is unimodular.
% By construction, $V$ is a sum of some columns of $A_{m-1}$; then, replacing one of these columns by $V$
% in $A_{m-1}$ does not change the determinant. In particular $\gcd V=1$.

We will construct a rational function of the form
\begin{equation}\label{eq:candidate}
h:=\frac{(\mathcal{C}_s')^d}{(\mathcal{C}_s'')^d}\dfrac{\displaystyle\prod_{i=1}^{s-1}\mathcal{C}_i^{(r_i)}}
{\displaystyle\prod_{j=s-k}^{s-1}\mathscr{H}_j^{r'_j}},
\end{equation}
for some integers $r_i,r'_j$ to be determined. Here the symbols $\mathcal{C}_{i}^{(r_i)}$ come from Notation~\ref{notation:ri}, but the $\mathscr{H}_j^{r'_j}$ are \emph{honest} powers of the $\mathscr{H}_j$.
We will see later that the $r'_j$ can be chosen positive.
As usual, the multiplicities $N_i$ are given by
\begin{equation*}
\divi\pi^*h=(\text{strict transform of} \divi h) + \sum_{i=1}^s N_i E_i.
\end{equation*}
Following Lemma~\ref{lem:numer_dic}, our first goal is to provide conditions such that $N_s=0$ and $N_i\neq 0$ if $i<s$.
Note that there is no numerical contribution of $(\mathcal{C}_s')^d\cdot(\mathcal{C}_s'')^{-d}$ to the~$N_i$.
Let $h_s:=(\pi^*h)_{|E_s}:E_s\dashrightarrow\mathbb{C}$.
Note that
\[
\divi h_s=d(\mathcal{C}_s'\cap E_s) - d(\mathcal{C}_s''\cap E_s) + \sum_{j=s-k}^{s-1}(N_j -r'_j\ell_j)(E_j\cap S) +\sigma_s^*B,
\]
for some divisor $B$ on $Z_s$.
Applying Lemma~\ref{lem:numer_dic}\ref{lem:numer_dic3}, the presence of $(\mathcal{C}_s')^d$ and $(\mathcal{C}_s'')^d$
implies that $E_s$ is dicritical
for~$h$ of degree~$d$ if $N_j=r'_j\ell_j$ for $s-k\leq j\leq s-1$.
We will show that there exist $r_i$ and $r'_j$ such that these last $k$ equalities are satisfied and $N_i \neq 0$ for all $i<s$.
%(In order to construct $h$ for which $E_s$ is dicritical  of degree~$d\geq1$, we just replace $\mathcal{C}_s'$ and $\mathcal{C}_s''$ by a product of $d$ such generic hypercurvettes.)

We recall that the unimodular matrix $A\in\mat(s;\mathbb{Z})$ is defined by \eqref{eq:matrix}.
We define another matrix $B\in\mat(k\times s;\mathbb{Z})$ with coefficients $b_{ij}$ satisfying
\[
\pi^* \mathscr{H}_{s-k+i-1}=(\text{strict transform of} \mathscr{H}_{s-k+i-1}) + \sum_{j=1}^s b_{ij} E_j
\]
for $i=1,\dots,k$. The relations between all those numbers come from the matrix identity
\begin{gather}\label{eq:matricial}
\begin{pmatrix}
r_1&\dots& r_{s-1}&0&-r'_{s-k}&\dots&-r'_{s-1}
\end{pmatrix}
\begin{pmatrix}
A\\
B
\end{pmatrix}=\\
\nonumber
\begin{pmatrix}
N_1&\dots& N_{s-k-1}&r'_{s-k}\ell_{s-k}&\dots&r'_{s-1}\ell_{s-1}&0
\end{pmatrix}.
\end{gather}
This matrix equation consists of $s+k$ linear equations. Let us make explicit
the last $k+1$ equations, denoted as $e_{s-k},\dots,e_{s-1},e_{s}$. We first recall some properties of $A$,
see Example~\ref{ejm:matrix}:
\begin{equation}\label{eq:rels_A}
a_{is}=\sum_{j=s-k}^{s-1} a_{ij},\quad 1\leq i<s,\quad\text{ and } \quad
a_{ss}=\sum_{j=s-k}^{s-1} a_{sj} +1.
\end{equation}
Using the same ideas, we have
\begin{equation}\label{eq:rels_B}
b_{is}=\sum_{j=s-k}^{s-1} b_{ij}+\ell_{s-k+i-1},\quad 1\leq i\leq k.
\end{equation}
The equations $e_j$, for $s-k  \leq j \leq s-1,$ are
\[
(e_j):\quad\sum_{i=1}^{s-1} r_i a_{ij} - \sum_{i=1}^k r'_{s-k+i-1} b_{ij}= r'_{j}\ell_{j}.
\]
Applying \eqref{eq:rels_A} and \eqref{eq:rels_B} we obtain
\[
\sum_{j=s-k}^{s-1}(e_j):\quad
\sum_{i=1}^{s-1} r_i a_{is} - \sum_{i=1}^k r'_{s-k+i-1} (b_{is}-\ell_{s-k+i-1})=\sum_{j=s-k}^{s-1}  r'_{j}\ell_{j},
\]
which is equivalent to
\[
\sum_{j=s-k}^{s-1}(e_j):\quad
\sum_{i=1}^{s-1} r_i a_{is} - \sum_{i=1}^k r'_{s-k+i-1} b_{is}=0,
\]
i.e, equation $e_{s}$.
Since the last equation is a consequence of the previous ones, we can eliminate it.
Denoting
\[
C:=
\begin{pmatrix}
0&\dots&0&\ell_{s-k}&\dots&0\\
\vdots&\ddots&\vdots&\vdots&\ddots&\vdots\\
0&\dots&0&0&\dots&\ell_{s-1}
\end{pmatrix}\in\mat(k\times {(s - 1)};\mathbb{Z}),
\]
the matrix identity \eqref{eq:matricial}, without its last equation, can be rewritten as
\begin{gather}
\begin{pmatrix}
r_1&\dots& r_{s-1}
\end{pmatrix}
A_{s-1}- \begin{pmatrix} r'_{s-k}&\dots&r'_{s-1}  \end{pmatrix}   B_{k,s-1}      =         \\
\nonumber
\begin{pmatrix}
N_1&\dots& N_{s-k-1}&0&\dots&0
\end{pmatrix}
+
\begin{pmatrix}
0&\dots&0&r'_{s-k}\ell_{s-k}&\dots&r'_{s-1}\ell_{s-1}
\end{pmatrix},
\end{gather}
where $A_{s-1}$ is the principal submatrix of $A$ formed by its first $s-1$ rows and columns, and $B_{k,s-1}$ is obtained from the first $s-1$ columns of $B$.
This is equivalent to
\begin{gather}
\begin{pmatrix}
r_1&\dots& r_{s-1}
\end{pmatrix}
A_{s-1}=\\
\nonumber
\begin{pmatrix}
N_1&\dots& N_{s-k-1}&0&\dots&0
\end{pmatrix}
+
\begin{pmatrix}
r'_{s-k}&\dots&r'_{s-1}
\end{pmatrix}
(B_{k,s-1} + C).
\end{gather}
 Since the matrix $A_{s-1}$ is unimodular, if we fix arbitrary values for $N_1,\dots,N_{s-k-1}$ and $r'_{s-k},\dots,r_{s-1}'$,
the system has a solution for $r_1,\dots,r_{s-1}$. We just have to choose all these values nonzero.
% We will fix positive values.
% \[
% \begin{pmatrix}
% r_1\dots& r_{s-1}
% \end{pmatrix}
% A_{s-1}\!=\!
% \begin{pmatrix}
% N_1\dots N_{s-k-1}&r'_{s-k}\ell_{s-k}\dots r'_{s-1}\ell_{s-1}
% \end{pmatrix}
% +
% \begin{pmatrix}
% r'_{s-1}\dots r'_{s-k}
% \end{pmatrix}
% B_{s-1}.
% \]
% Then, if we fix non-zero $N_1,\dots, N_{s-k-1}$ a solution
% for the $r_i$ exists.
% \begin{claim}
% There exist $r_1,\dots,r_{m-1}, r'_{s-k},\dots,r'_{s-1}\in\mathbb{Z}$ such that
% \[
% \begin{pmatrix}
% r_1&\dots& r_{m-1}&0
% \end{pmatrix}
% \tilde{A}=
% \begin{pmatrix}
% N_1&\dots& N_{m-1}&0
% \end{pmatrix},
% \]
% with $N_1\cdot\ldots\cdot N_{m-1}\neq 0$.
% \end{claim}
%
% If the Claim holds, then $h:=C_m\prod_{j=1}^{m-1} C_j^{r_j}$ satisfies the statement.
\end{proof}

\begin{example}\label{example_special}
 We construct $\pi$ as the composition of the following three point blow-ups.
As usually,  $\pi_1$ is the blow-up of $Z_1=0\in\mathbb{C}^3= X_0$. We use coordinates $x,y,z$ in $\mathbb{C}^3$, as well as in the various charts of $X_i, i>0$, in the usual way. We take $Z_2$ as  the origin  of the chart in $X_1$ where $E_1$ is given by $z=0$, and $Z_3$ as the origin  of the chart in $X_2$ where $E_2$ is given by $y=0$.

The following can be verified by explicit computations.  We can take a class of hypercurvettes $\mathcal{C}_1$ given by $ax+by+cz$, for generic choices of the coefficients.
Similarly, we can take a class of hypercurvettes $\mathcal{C}_2$ of the form $cx+dy$, for generic choices of the coefficients.  For $\mathcal{C}_3'$ and $\mathcal{C}_3''$ we can take $x+z^2$ and $x-z^2$, respectively.

As special hypersurfaces $\mathscr{H}_1$ and $\mathscr{H}_2$ for $E_3$, associated to the intersections $E_1\cap E_3$ and $E_1\cap E_3$, we can take  $x^2+z^2y$ and $x^2z+y^3$, respectively, where we took $\ell_1=\ell_2=1$ for simplicity.

Concentrating on the base case $d=1$, we thus look for a rational function $h$  of the form
\[
\frac{\mathcal{C}_3'}{\mathcal{C}_3''}
\frac{\mathcal{C}_1^{(r_1)} \mathcal{C}_2^{(r_2)} }
{\mathscr{H}_1^{r'_1} \mathscr{H}_2^{r'_2}}.
\]
With the above choices one checks that, with the notation of the proof above,
\begin{gather}\label{eq:matrixexample}
\begin{pmatrix}
A\\
B
\end{pmatrix}=
\nonumber
\begin{pmatrix}
1 & 1 & 2 \\
1 & 2 & 3 \\
1 & 2 & 4 \\\hline
2 & 4 & 7 \\
3 & 5 & 9
\end{pmatrix}.
\end{gather}
The  identity \eqref{eq:matricial}  becomes
\begin{gather}
\begin{pmatrix}
r_1& r_2&0&-r'_{1}&-r'_{2}
\end{pmatrix}
\begin{pmatrix}
A\\
B
\end{pmatrix}=\\
\nonumber
\begin{pmatrix}
r'_{1}&r'_{2}&0
\end{pmatrix},
\end{gather}
which is equivalent to the equalities
\begin{equation}\label{r-equations}
 r_1=2r'_1\qquad\text{ and }\qquad r_2=r'_1+3r'_2.
\end{equation}
Also, independent of choices, in the chart of $X_3$ where $E_1$, $E_2$ and $E_3$ are given by $z=0$, $y=0$ and $x=0$, respectively, one can calculate that the restriction of $\pi^*h$ to $E_3$ is  $\frac{1+z}{1-z}$.  So indeed $E_3$ is a dicritical of degree $1$ for $h$.
\end{example}

\begin{remarks}\label{remark positive}
\mbox{}

\begin{enumerate}[label=(\arabic{enumi})]
\item
In the proof of Proposition~\ref{prop3}, we chose first arbitrary positive $\ell_j$, for $s-k\leq j \leq s-1$. And then the $N_j$ and $r'_j$ for $s-k\leq j\leq s-1$ had to be related as $N_j=r'_j\ell_j$.
%(In fact, for this proof, we could take all $\ell_j=1$.)

 In the proof of Theorem \ref{thm4} below, we will start by fixing positive $r'_j$, for $s-k\leq j\leq s-1$. Then we will choose the $\ell_j$ \lq big enough\rq\ with respect to the $r'_j$ and the geometry of the modification, in a sense that we will be made precise. Note that in such a strategy, the chosen special hypersurfaces $\mathscr{H}_{j}$ depend also on the $r'_j$.
 Also the $N'_j \, (1\leq j <s-k)$ will be chosen \lq big enough\rq\ with respect to the $r'_j$.

\item In fact, in the proofs of both Proposition \ref{prop3} and Theorem \ref{thm4}, we could simply take $r'_{s-k}=\dots=r'_{s-1}=1$. But the arguments are really the same for more general $r'_j$.
\end{enumerate}

%Notice that in the proof of the previous lemma, we can obtain $h$ such that $N_i>0$ for $i<s$, and $r_i'>0$ with $N_i=\ell_i r_i'$ for $s-k\leq i<s$. This will be useful later.
\end{remarks}

A first approach to solve the general problem would be to prove that, if we perform
more blow-ups, the function still has $E_s$ as  only dicritical.
We will see in Example~\ref{ejm:special_ok} that, with the procedure of Proposition~\ref{prop3},
it may  not be possible to find such a function. The proof of our main theorem below requires another strategy. We start with
a rational function provided by Proposition~\ref{prop3} and we modify it to achieve the goal.

% \begin{proof}[Proof of the Claim]
% From Bézout's identity we find $\tilde{r}\in\mathbb{Z}^{m-1}$ such that
% \[
% \begin{pmatrix}
% \tilde{r}&1
% \end{pmatrix}
% \begin{pmatrix}
% V\\
% a_{m,m}
% \end{pmatrix}
% =0.
% \]
% If $V_1,\dots,V_{m-2}\in\mathbb{Z}^{m-1}$ is a basis of $\mathbb{Z}\langle V\rangle^\perp$
% the solutions of the above equality are
% \[
% \begin{pmatrix}
% X&1
% \end{pmatrix}
% \mathbf{V},\qquad
% \mathbf{V}:=\begin{pmatrix}
% V_1\\
% \dots\\
% V_{m-2}\\
% \tilde{r}
% \end{pmatrix},\qquad
% X\in\mathbb{Z}^{m-2}.
% \]
% The matrix $\mathbf{V} A_{m-1}$
% is non singular as $\tilde{r}$ is not orthogonal to~$V$.
% Hence,
% if $e_m$ is the last vector of the canonical basis, then
% \[
% \begin{pmatrix}
% X&1
% \end{pmatrix}
% \begin{pmatrix}
% \mathbf{V}& e_m
% \end{pmatrix}
% \begin{pmatrix}
% A_{m-1}&V\\
% W&a_{m,m}
% \end{pmatrix}
% =
% \begin{pmatrix}
% X&1
% \end{pmatrix}
% \begin{pmatrix}
% \mathbf{V} A_{m-1}&0\\
% \tilde{r} A_{m-1}+W& 0
% \end{pmatrix}.
% \]
% The claim may not hold only if one
% column of $\mathbf{V} A_{m-1}$ vanishes,
% and it is not possible.
% \end{proof}

% We can go further adding some restrictions to the sequence of blowing-ups.

\begin{thm}\label{thm4}
Let $\pi$ be a sequence of $m$ blow-ups as in \eqref{sequence}, let $s\in\{1,\dots,m\}$, and $d\geq  1$.
% and assume that all the centers $Z_j$, $j>s$, are contained in $E_s$.
Then there exists a rational function $h$ for which $E_i$ is not dicritical for $i\neq s$,
and $E_s$ is dicritical of degree~$d$.
\end{thm}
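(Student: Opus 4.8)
The plan is to start from the rational function $h_0$ produced by Proposition~\ref{prop3} applied to the partial modification $\pi_{s,1}\colon X_s\to X_0$ (with $E_s$ the last component), and then to correct it inductively along the later blow-ups $\pi_{s+1},\dots,\pi_m$ so that none of $E_{s+1},\dots,E_m$ becomes dicritical, while preserving the behaviour at $E_1,\dots,E_s$. So first I would fix a choice of hypercurvettes and special hypersurfaces (those of Lemma~\ref{lem:special_hypersurface} for $E_s$, with the $\ell_j$ left free for now), and write $h_0$ in the form~\eqref{eq:candidate}; by Lemma~\ref{lem:numer_dic} we know $N_i(h_0)\neq 0$ for $i<s$ and $N_s(h_0)=0$, and $E_s$ is dicritical of degree $d$. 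The point is that as we blow up further, a component $E_k$ ($k>s$) will acquire a multiplicity $N_k$ computed by Lemma~\ref{lem:inductive}; if that multiplicity happens to be zero, $E_k$ could be dicritical, and we must perturb $h_0$ to force $N_k\neq 0$ for every $k>s$ without disturbing the already-arranged data $N_1,\dots,N_s$.

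The mechanism for the correction is the following. For each later center $Z_k$, $k>s$, consider the components $E_j$ ($j\le s$, or already-treated $j$ with $s<j<k$) that contain $Z_k$; among these there is at least one $E_j$ that meets $E_s$ positively, and one can bring in its special hypersurface $\mathscr{H}_j$ (and, if necessary, additional generic hypercurvettes of components not containing $Z_k$) as an extra factor with a new exponent. The key arithmetic fact is that, because all relevant principal submatrices of the valuation matrix are unimodular (Proposition~\ref{matrixA}), the linear system governing the $N_i$'s in terms of the exponents $r_i,r'_j$ remains solvable after we impose the new conditions $N_k\neq 0$, and — crucially — the equations $e_{s-k},\dots,e_{s-1}$ from the proof of Proposition~\ref{prop3} that guarantee $(\pi^*h)|_{E_s}$ has exactly degree $d$ are left intact, because the special hypersurfaces intersect $E_s$ only along $E_s\cap E_j$ plus vertical fibers, whose contribution to $\operatorname{div}h_s$ we already know how to bookkeep. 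Here is where the $\ell_j$ must be chosen \lq big enough\rq\ (cf.\ Remarks~\ref{remark positive}): the multiplicities the special hypersurfaces contribute to the later $E_k$ grow with $\ell_j$, and we need these to dominate the contributions of the hypercurvettes so that the resulting $N_k$ can be made nonzero (indeed of any prescribed sign) while the finitely many constraints at $E_1,\dots,E_s$ still have a solution; the generality lemma following Lemma~\ref{lem:special_hypersurface} ensures the special hypersurfaces are \lq good\rq\ at all the finitely many points of $E_s\cap E_j$ and of $Z_s$ through which later centers pass (Remark~\ref{choice points}), so that the local picture at each $Z_k$ is the expected normal-crossings one.

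Carrying this out, I would organize the argument as an induction on $m-s$: the base case $m=s$ is exactly Proposition~\ref{prop3}. For the inductive step, assuming the statement for $m-1$ blow-ups, apply it to $\pi_{m-1,0}$ to get $h$ with $E_i$ non-dicritical for $i\neq s$, $i\le m-1$, and $E_s$ dicritical of degree $d$; then examine $E_m$. If $N_m\neq 0$ we are done by Lemma~\ref{lem:numer_dic}(1). If $N_m=0$, modify $h$ by multiplying by a suitable power of a special hypersurface $\mathscr{H}_j$ attached to a component $E_j\supset Z_m$ meeting $E_s$ positively, together with compensating hypercurvette factors, re-solving the (still unimodular) linear system so that $N_1,\dots,N_{m-1}$ keep their nonzero values, $N_s$ stays $0$ with the $e_j$-equations still giving degree exactly $d$, and $N_m$ becomes nonzero. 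The main obstacle I expect is precisely this simultaneous bookkeeping: verifying that inserting the new factor changes $N_m$ while leaving the degree computation on $E_s$ (the equations $e_{s-k},\dots,e_{s-1}$) and the nonvanishing of $N_1,\dots,N_{m-1}$ under control — this is the content that forces the careful \lq big enough\rq\ choice of the $\ell_j$ and of the $N_j$ for $j<s-k$, and is the reason the full proof of Theorem~\ref{thm4} is deferred to its own section.
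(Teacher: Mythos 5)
There is a genuine gap in your correction mechanism. You propose to fix the later components $E_k$, $k>s$, by multiplying the function of Proposition~\ref{prop3} by further special hypersurfaces and hypercurvettes and re-solving the (unimodular) linear system for the exponents. The problem is that the only factors whose insertion leaves the degree of $(\pi^*h)|_{E_s}$ unchanged are those restricting to $E_s$ either vertically (the hypercurvettes $\mathcal{C}_i$, $i<s$, whose strict transforms miss $Z_s$) or along $E_j\cap E_s$ in a way compensated by the balancing equations $N_j=r'_j\ell_j$ (the special hypersurfaces $\mathscr{H}_j$). With these factors alone, the multiplicity $N_i$ for $i>s$ is \emph{identically} zero on the constraint subspace whenever $\pi_{s,i-1}(Z_i)\not\subset\bigcup_{b<s}E_b$ --- this is precisely Step~\ref{general positivity}\ref{step5-2} of the paper's proof, and already occurs for $s=1$ in Example~\ref{ejm:special_ok} --- so no choice of exponents can make $N_i\neq 0$. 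On the other hand, the factors that do move $N_i$, namely powers $\mathcal{C}_j^{k_j}$ with $j\geq s$, restrict to $E_s$ as nontrivial horizontal divisors, so inserting them multiplicatively adds $k_j(\tilde{\mathcal{C}}_j\cap E_s)$ to $\divi h_s$ and strictly increases the degree of the dicritical $E_s$; unimodularity lets you re-solve for the vector of multiplicities but gives no control over this extra divisor on $E_s$. Thus the purely multiplicative induction cannot simultaneously keep the degree equal to $d$ and make all $N_k\neq 0$; the paper explicitly flags this obstruction just before Theorem~\ref{thm4}.

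The device the paper actually uses is an \emph{additive} perturbation of the denominator: writing $h'=f/g$ for the function of Proposition~\ref{prop3}, it sets $h=\bigl(f\prod_{j>s}\mathcal{C}_j^{k_j}\bigr)\big/\bigl(g\prod_{j>s}\mathcal{C}_j^{k_j}+\mathcal{C}_s^{\grande}\bigr)$. Because the \emph{same} product multiplies numerator and denominator, and $\mathcal{C}_s^{\grande}$ vanishes to higher order along $E_s$ once $\grande\, a_{ss}>\alpha_s$, one gets $(\pi^*h)|_{E_s}=(\pi^*h')|_{E_s}$, so the degree $d$ is literally preserved; meanwhile the valuation of the denominator along $E_i$, $i>s$, drops to $\min(\beta_i,\grande\, a_{si})$, and choosing $\grande$ in the window $\frac{\alpha_s}{a_{ss}}<\grande<\frac{\alpha_s}{a_{ss}}+W_i$ (possible once the $k_j$ are large) forces $N_i>0$ exactly for the components where the linear method is stuck. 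Your setup --- starting from Proposition~\ref{prop3}, choosing the $\ell_j$ and the $N'_b$ \lq big enough\rq, and reducing the problem to $N_k\neq 0$ for $k>s$ --- matches the paper; it is the perturbation step that must be replaced by this non-linear, min-type argument rather than by re-solving a linear system.
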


The (long) proof of this theorem is moved to the last section in order to do not break the sequence of ideas.
However, Example \ref{example_more special} below already exhibits some aspects of that proof.

\begin{example}\label{ejm:special_ok}(continuing Example~\ref{ejm:special})
We look for a function $h$ for which $E_1$ is a dicritical of multiplicity~$1$ and $E_2$ is not dicritical.

Of course, the naive method of using $h:=\frac{\mathcal{C}_1'}{\mathcal{C}_1''}$, for $\mathcal{C}_1',\mathcal{C}_1''$
generic hypercurvettes of $E_1$ (e.g, generic linear functions) does not work, since $h$ is not constant
on~$E_2$. Choose another generic hypercurvette $\mathcal{C}_1$ and consider
\[
h(x.y,z)=\frac{\mathcal{C}_1'(x z - y^2)^k}{\mathcal{C}_1''(x z - y^2)^k + \mathcal{C}_1^\grande}.
\]
It is not difficult to check that, if $2k+1<\grande$, then $E_1$ is dicritical for $h$ (of degree~$1$).
Moreover, if $\grande<3k+1$, then $h$ is constant on $E_2$.
\end{example}

\begin{example}(continuing Example \ref{example_special})\label{example_more special}
We add a blow-up $\pi_4$, namely the one with center the curve $Z_4$, given by $x=y+1=0$ in the chart of $X_3$ where $E_3$ is given by $x=0$.
One verifies that $xz+y^2$ can be taken as hypercurvette $\mathcal{C}_4$.

We fix some of the free parameters in the construction of the rational function~$h$
in Example~\ref{example_special}: we choose $r'_1=r'_2=1$, implying by \eqref{r-equations} that $r_1=2$ and $r_2=4$, and we take $\mathcal{C}_1^{(2)}=(x+y+z)^2$ and $\mathcal{C}_2^{(4)}=(x+y)^4$.
We can check that $h$ is also dicritical for $E_4$, so it is not a valid function for the
statement of Theorem~\ref{thm4}.

Let us try to fix it.
The expression \eqref{general h} becomes $h=\frac{f}{g}$ where
\[
f=(x+z^2)(x+y+z)^2(x+y)^4 \quad\text{ and }\quad g=(x-z^2)(x^2+z^2y)(x^2z+y^3).
\]
Let us consider an extra hypercurvette $\mathcal{C}_3 = 2x+z^2$ for $E_3$
and define
\[
h' = \frac{(x+z^2)(x+y+z)^2(x+y)^4 (xz+y^2)^k}{(x-z^2)(x^2+z^2y)(x^2z+y^3)(xz+y^2)^k +(2x+z^2)^\ell},
\]
where $k,l\in\mathbb{Z}_{>0}$ will be determined. It is not hard to see that $h$ and $h'$
have the same behavior in $E_1,E_2$. Moreover, if
\[
5+\frac{3}{2} k < \ell
\]
then $h'$ is also a dicritical for $E_3$ with degree~$1$
Finally, a condition to be constant for $E_4$ is given by
% {\color{red} Recall the ideas of \eqref{eq:condition} which now comes later}
% After some calculations, one obtains that in this case the condition \eqref{eq:condition} becomes
\[
\ell < 5 + \frac{7}{4} k.
\]
In the proof of Theorem~\ref{thm4}, we will see several conditions of the same nature in \eqref{eq:condition}.
With a choice of $k\gg 0$, e.g. $k=5$, there are choices for $\ell$, e.g. $\ell=13$.
% Also, in the chart of $X_4$ where $E_1$, $E_3$ and $E_4$ are given by $z=0$, $x=0$ and $y=0$, respectively, we have that the restriction of $\pi^*h$ to $E_3$ is precisely $\frac{1+z}{1-z}$, as in Example \ref{example_special}.
\end{example}

With the previous results we can finally prove the main theorem.
Let $\pi$ be a sequence of $m$ blow-ups as in \eqref{sequence}.
Since the set of values of non-dicritical components is finite, the following result is straightforward.

\begin{lemma}\label{lemma6} Fix $s\in\{1,\dots,m\}$.
Let $h$ a rational function for which $E_s$ is dicritical of degree~$d_s\geq 0$ and the $E_i, i\neq s,$ are not dicritical.
Then, for generic $a,b\in\mathbb{C}$, the function $g:=\frac{h-a}{h-b}$ satisfies the same condition
and $\pi^*(g)(E_i)\in\mathbb{C}^*$, if $i\neq s$.
\end{lemma}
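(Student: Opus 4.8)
The goal is to prove Lemma~\ref{lemma6}: starting from a rational function $h$ for which $E_s$ is dicritical of degree $d_s\geq 0$ and all other $E_i$ are non-dicritical, we want to produce $g=\frac{h-a}{h-b}$ with the same dicritical behavior, but with the additional property that the (generic) constant value of $\pi^*(g)$ on each non-dicritical $E_i$ lies in $\mathbb{C}^*$ (rather than being $0$ or $\infty$).

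The plan is as follows. First I would observe that the dicritical/non-dicritical classification is a geometric property of the restriction $(\pi^*h)|_{E_i}$: it is non-dicritical (constant $c_i\in\mathbb{C}\cup\{\infty\}$) or dicritical (dominant). Since $a\mapsto \frac{a-a}{a-b}$ is a Möbius transformation of $\mathbb{P}^1$ (for $a\neq b$), postcomposing $h$ with it does not change whether a restriction is constant or dominant, and for a dominant map it does not change the degree, because the general fiber of $(\pi^*g)|_{E_s}$ is the general fiber of $(\pi^*h)|_{E_s}$ over a point (the preimage of a generic point under a Möbius map is a single generic point). Hence for \emph{every} choice of distinct $a,b$, the function $g=\frac{h-a}{h-b}$ still has $E_s$ dicritical of degree $d_s$ and all other $E_i$ non-dicritical. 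This part is routine.

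The substantive point is the genericity statement about the values on the non-dicritical components. For $i\neq s$, let $c_i\in\mathbb{C}\cup\{\infty\}$ be the constant value of $(\pi^*h)|_{E_i}$. Then $(\pi^*g)|_{E_i}$ is the constant $\frac{c_i-a}{c_i-b}\in\mathbb{C}\cup\{\infty\}$, with the obvious convention when $c_i=\infty$ (giving value $1$). This lies in $\{0,\infty\}$ precisely when $a=c_i$ (value $0$) or $b=c_i$ (value $\infty$), with $c_i$ finite; if $c_i=\infty$ the value is $1\in\mathbb{C}^*$ regardless. So the set of ``bad'' pairs $(a,b)$ is contained in $\bigcup_{i\neq s,\,c_i\neq\infty}\bigl(\{a=c_i\}\cup\{b=c_i\}\bigr)$ together with the diagonal $\{a=b\}$, a proper Zariski-closed (indeed finite union of lines) subset of $\mathbb{C}^2$. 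Here we use that there are only finitely many components $E_i$, hence only finitely many values $c_i$ — this is exactly the remark preceding the lemma. Thus for generic $(a,b)\in\mathbb{C}^2$ all the constants $\frac{c_i-a}{c_i-b}$ are in $\mathbb{C}^*$, which is the claim.

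I do not expect a genuine obstacle here; the only thing to be careful about is bookkeeping with the value $\infty$ (both as a possible $c_i$ and as a possible value of $g$), and the trivial but necessary observation that $g$ is still non-constant — which holds since $h$ is non-constant and $a\neq b$, so composing with an invertible Möbius map keeps it non-constant. One should also note $g$ is genuinely a rational function with indeterminacy locus containing $p$ in the relevant sense (its dicritical $E_s$ exists), so it fits the framework; this is automatic once we know $E_s$ is still dicritical. I would then simply combine: choose $(a,b)$ outside the finite union of bad lines, and all three conclusions — same dicritical set, same degree $d_s$, values in $\mathbb{C}^*$ on non-dicriticals — hold simultaneously.
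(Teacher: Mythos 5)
Your argument is correct and is exactly the one the paper has in mind: the paper states the lemma as ``straightforward'' precisely because the set of constant values $c_i$ on the non-dicritical components is finite, so a generic M\"obius transformation $t\mapsto\frac{t-a}{t-b}$ moves them all into $\mathbb{C}^*$ while preserving constancy, dominance, and degree. Your bookkeeping of the $\infty$ cases and the identification of the bad locus as a finite union of lines in $(a,b)$-space fills in the details the paper omits, with no deviation in approach.
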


\begin{prop}\label{prop7}
Let $\emptyset \neq J\subset\{1,\dots,m\}$. Assume that for each $j\in J$ there is a rational function $h_j$ for which $E_j$ is a dicritical of
degree $d_j\geq 0$ and $E_i, i\neq j,$ is not dicritical.
Then there is a rational function $h$ for which $E_j$ is dicritical of degree~$d_j$ if $j\in J$ and $E_j$
is not dicritical if $j\notin J$.
\end{prop}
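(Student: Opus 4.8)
The plan is to combine the given functions $h_j$ ($j\in J$) multiplicatively, using integer exponents of different signs, so that the dicritical behavior on each $E_j$ is inherited but the ``interference'' on the non-dicritical components is killed. First I would normalize: by Lemma~\ref{lemma6}, replace each $h_j$ by $g_j:=\frac{h_j-a_j}{h_j-b_j}$ for generic $a_j,b_j\in\mathbb C$, so that $g_j$ still has $E_j$ as its only dicritical, of the same degree $d_j$, and moreover $\pi^*(g_j)(E_i)\in\mathbb C^*$ for every $i\neq j$; in particular $\nu_i(g_j)=0$ for all $i\neq j$, and $\nu_j(g_j)=:N_j$ is some fixed integer.

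Next I would look for $h$ of the form $h=\prod_{j\in J} g_j^{e_j}$ with exponents $e_j\in\mathbb Z$ to be chosen. For this $h$ one computes $\nu_i(h)=\sum_{j\in J} e_j\,\nu_i(g_j)$. Because $\nu_i(g_j)=0$ when $i\neq j$, this collapses to $\nu_i(h)=e_i N_i$ if $i\in J$ and $\nu_i(h)=0$ if $i\notin J$. Now fix $i\in J$. Restricting $\pi^*h$ to $E_i$: the factors $g_j^{e_j}$ with $j\neq i$ restrict to nonzero \emph{constants} on $E_i$ (by the normalization), so $(\pi^*h)|_{E_i}=(\text{const})\cdot\big((\pi^*g_i)|_{E_i}\big)^{e_i}$. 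Since $(\pi^*g_i)|_{E_i}$ is a dominant rational map $E_i\dashrightarrow\mathbb P^1$, composing with $t\mapsto (\text{const})\,t^{e_i}$ keeps it dominant for every $e_i\neq 0$, and multiplies its degree by $|e_i|$; hence $E_i$ is dicritical of degree $|e_i|\cdot d_i$. So the only remaining constraint is $|e_i|=1$, i.e. take $e_i\in\{+1,-1\}$; for definiteness $e_j=1$ for all $j\in J$. With that choice $E_j$ is dicritical of degree $d_j$ for all $j\in J$.

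It then remains to check that $E_i$ with $i\notin J$ is \emph{not} dicritical for $h=\prod_{j\in J}g_j$. On such an $E_i$ each $g_j$ ($j\in J$, so $j\neq i$) restricts to a nonzero constant, hence the product restricts to a nonzero constant, so $E_i$ is non-dicritical (value in $\mathbb C^*$). This finishes the proof. The only genuinely delicate point — and the one I would write out carefully — is the degree bookkeeping on $E_i$: one must argue that post-composing the dominant map $(\pi^*g_i)|_{E_i}$ with $t\mapsto c\,t^{e_i}$ multiplies the degree (defined as the intersection number of a general fiber with $\ell_i$) by $|e_i|$, and in particular that with $e_i=1$ the degree is unchanged; this is immediate since a general fiber of $c\,t^{e_i}\circ (\pi^*g_i)|_{E_i}$ is the preimage of $|e_i|$ general points under $(\pi^*g_i)|_{E_i}$ when $e_i=1$ it is just one general fiber of $(\pi^*g_i)|_{E_i}$. (Note this also explains why the hypothesis allows $d_j=0$: a degree-$0$ dicritical stays a degree-$0$ dicritical.) No further control is needed because the normalization step has already decoupled the components entirely.
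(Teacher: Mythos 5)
Your proposal is correct and is essentially the paper's own proof: normalize each $h_j$ via Lemma~\ref{lemma6} to $g_j=\frac{h_j-a_j}{h_j-b_j}$ and take $h=\prod_{j\in J}g_j$, so that on each $E_i$ the factors with $j\neq i$ restrict to nonzero constants and contribute nothing. The extra bookkeeping you include (that $\nu_i(g_j)=0$ for $i\neq j$ and that multiplying a dominant restriction by a nonzero constant preserves the degree) is exactly the content the paper leaves implicit.
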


\begin{proof}
It is enough to choose generic $a_j,b_j$ as in Lemma~\ref{lemma6} and consider
\[
h:=\prod_{j\in J}\frac{h_j-a_j}{h_j-b_j}.
\qedhere
\]
\end{proof}

Finally, combining  Theorem~\ref{thm4} and Proposition~\ref{prop7}, we end the proof
of Theorem~\ref{thm:main}, likely reaching an optimal result in our setting.

% \begin{thm}
% Let $\emptyset \neq J\subset\{1,\dots,m\}$ and let $d_j\in\mathbb{Z}_{\geq 1}$, $j\in J$.
% Then, there is a rational function $h$ for which $E_j$ is  dicritical of degree~$d_j$ if $j\in J$,
% and $E_j$ is
% non-dicritical  if $j\notin J$.
% %either non-dicritical or dicritical of degree~$0$ if $j\notin J$.
% \end{thm}

%\begin{proof}
%Using Theorem~\ref{thm4}, we can construct $h_j$ for $j\in J$, for which $E_j$ is dicritical of degree~$d_j$ and $E_i$, $i\neq j$, is  non-dicritical.
%Then, combining Proposition~\ref{prop7} and Remark~\ref{rmk_product}, $h:=\prod_{j\in J} h_j^{d_j}$ satisfies the statement.
%\end{proof}

\begin{remark}\label{rmk_product}
Let us assume that $E_i$ is dicritical for two functions $h_1,h_2$, with degrees $0\leq d_1\leq d_2$.
Let $h:=h_1 h_2$.
\begin{enumerate}[label=(\arabic{enumi})]
\item In general $E_i$ will be a dicritical of degree~$d$ for $h$, with $d_2-d_1\leq d\leq d_1+d_2$; if $d_1=d_2$,
then the option to be non-dicritical may also happen.

\item If $d_2>d_1=0$, then $E_i$ is dicritical of degree~$d_2$ for $h$.
\end{enumerate}
\end{remark}

\section{Proof of Theorem~\ref{thm4}}\label{sec:proof_thm}

%\begin{proof}
%We assume again for simplicity that $d=1$.
Let us consider first a rational function $h'$ of the form \eqref{eq:candidate},  constructed as in the proof of Proposition~\ref{prop3}, and additionally taking into account Remarks ~\ref{choice points} and ~\ref{remark positive}.
This function $h'$ thus depends on some positive integer numbers
\[
r'_{s-k},\dots,r'_{s-1}, \ell_{s-k},\dots,\ell_{s-1}, N'_{1},\dots,N'_{s-k-1},
\]
where we assume that $N'_{1},\dots,N'_{s-k-1},\ell_{s-k},\dots,\ell_{s-1}$ are big enough, in a sense that will be made precise in Step \ref{general positivity}.
We put also $N'_i:=r'_i \ell_i$, for $s-k\leq i\leq s-1$, hence, we have data for $N'_1, \dots, N_{s-1}'$.

% We can write $h'$ as $h'=\frac{f}{g}$, where
% $f, g\in\mathbb{C}[x_1,\dots,x_n]$ are coprime polynomials.
We can write $h'$ as $h'=\frac{f}{g}$, where
$f, g\in R[X_0]$.
% {\color{red} I think we can avoid the conditions of coprime polynomials in $n$ variables, but I am checking.}
Note that $f$ is a product of hypercurvette-polynomials and $g$ is a product of hypercurvette-polynomials and defining polynomials of the special hypersurfaces~$\mathscr{H}_{j}$.
With the above notation,
\[
\divi\pi^*h'=(\text{strict transform of} \divi h')+\sum_{i=1}^m N'_i E_i,\qquad
N'_i=\nu_i(h').
\]
For $j>s$, we choose a generic hypercurvette $\mathcal{C}_j$ of $E_j$ (in particular, we assume normal crossing behavior). Take also another generic hypercurvette~$\mathcal{C}_s$
for~$E_s$. Our candidate for the rational function $h$ is
\begin{equation}\label{general h}
h:=\frac{f\displaystyle\prod_{j=s+1}^m \mathcal{C}_j^{k_j}}{g\displaystyle\prod_{j=s+1}^m \mathcal{C}_j^{k_j}+\mathcal{C}_s^\grande}
\text{  for some }\grande\in\mathbb{Z}_{>0}\text{ and }\mathbf{k}:=(k_{s+1},\dots,k_m)\in\mathbb{Z}_{>0}^{m-s}.
\end{equation}
Here the expressions $\mathcal{C}_s^{\grande}$ and $\mathcal{C}_j^{k_j}$ are \emph{honest} powers.
As for $h'$, we have
\[
\divi\pi^*h=(\text{strict transform of} \divi h)+\sum_{i=1}^m N_i E_i,\qquad N_i=\nu_i(h).
\]
The goal is to find $\mathbf{k},\grande$ for which $h$ satisfies the properties required in the assertion of the theorem.
For any $i=1,\dots,m$ we set
\begin{equation}\label{eq:alphabeta}
\alpha_i:=\nu_i(f)+\sum_{j=s+1}^m k_j\nu_i(\mathcal{C}_j)\quad\text{ and } \quad
\beta_i:=\nu_i(g)+\sum_{j=s+1}^m k_j\nu_i(\mathcal{C}_j).
\end{equation}
Since $\mathcal{C}_s$ is a generic hypercurvette,
\begin{equation}\label{eq:min}
N_i = \nu_i(h) = \alpha_i-
\min\left(\beta_i,\grande\nu_i(\mathcal{C}_s)\right)= \alpha_i-
\min\left(\beta_i,\grande a_{si}\right),
\end{equation}
and $N'_i = \nu_i(h') = \alpha_i-\beta_i\leq N_i$.
Recall that $\nu_i(\mathcal{C}_j)=a_{ji}$.

The statement follows if we prove that there are choices for the parameters
such that $N_i>0$ if $i\neq s$, $N_s=0$,
and $E_s$ is dicritical of degree~$1$ for~$h$.

\begin{case}\label{case1}
For $1\leq i<s$, we have that $N_i>0$. In particular, for any value of $\mathbf{k},\grande$
such $E_i$ is not dicritical.
\end{case}

\begin{proof}[Proof of Case{\rm~\ref{case1}}]
We have $N_i\geq N_i'>0$.
\end{proof}

\begin{case}\label{case2}
If $\frac{\alpha_s}{a_{ss}}<\grande$, then the divisor $E_s$ is dicritical of degree~$d$ for $h$, in particular we have that $N_s=0$.
\end{case}

\begin{proof}[Proof of Case{\rm~\ref{case2}}]
Since $N'_s=0$, we have $\alpha_s=\beta_s$.
% By definition $\nu_s(\mathcal{C}_s)=a_{ss}$.
Then, if $\frac{\alpha_s}{a_{ss}}<\grande$,
we have that $\beta_s=\alpha_s<\grande a_{ss}$. Then $N_s=N'_s=0$.
Moreover, $(\pi^* h)_{E_s}=(\pi^* h')_{E_s}$; hence  $E_s$ is dicritical of degree $d$ for $h$, since it is for $h'$.
\end{proof}

The rest of the proof is devoted to the main case $i>s$. For a fixed $i>s$ we introduce the notation
\begin{align*}
\zz_i\!:=& \{j\!\in\!\{\!i,\dots,m\}\!\mid\! Z_i\!\subset\! \pi_{i-1,j}(\mathcal{C}_j)\},   \\  %&%\\%\qquad m_i:=\#\zz_i.
\ct_i\!:=&\{j\!\in\!\{\!1,\dots,i-1\}\!\mid\! Z_i\subset E_j\},\\%\\
\spl_i\!:=&\{j\!\in\!\{\!s\!-\!k,\dots,s\!-\!1\}\!\mid\! Z_i\subset\tilde{\mathscr{H}}_j\},  %,  \\
%\prj_i\!:=&\{j\!\in\!\{\!1,\dots,s\!-\!k\!-\!1\}\!\mid\! \pi_{j,i-1}(Z_i)\!\subset\! E_j\}.
\end{align*}
always considered in $X_{i-1}$.
% Note that if $\spl_i\neq\emptyset$, then actually $Z_i\subset E_s$, in particular, $s\in\ct_i$. Moreover,
% there is a map $\inc_i:\spl_i\to\ct_i$ such that $E_{\inc_i(j)}$ contains $\mathscr{H}_j$ and $\inc_i(j)\neq s$.
Note that $\zz_i \neq \emptyset$ since $i\in\zz_i$.  Also $\ct_i \neq \emptyset$, since by construction $Z_i$ must be contained in at least one exceptional component.

We start with some preliminary computations.

\begin{step}\label{casoalpha}
For $j\in\zz_i$, let $\mu_i(\mathcal{C}_j)$ be the multiplicity of $\mathcal{C}_j$ along $Z_i$  (in $X_{i-1}$). Then
\[
\alpha_i=
%\sum_{a\in\ct_i}\nu_a(f) +\sum_{j=s+1}^m\sum_{a\in\ct_i} k_j\nu_a(\mathcal{C}_j)+\sum_{j\in\zz_i} k_j \mu_i(\mathcal{C}_j)=
\sum_{a\in\ct_i}\alpha_a + \sum_{j\in\zz_i} k_j \mu_{i}(C_j).
%\mu_i(\mathcal{C}_j).
\]
\end{step}

\begin{proof}[Proof of Step{\rm~\ref{casoalpha}}]
We compute separately $\nu_i(f)$ and $\nu_i(\mathcal{C}_j)$ using Lemma~\ref{lem:inductive}. Note that the strict transform of $f$
comes from \lq previous\rq\ (generic) hypercurvettes, so its multiplicity along $Z_i$ vanishes.
This is also the case for $\mathcal{C}_j$ if $j\notin\zz_i$; hence
\[
\nu_i(f) = \sum_{a\in\ct_i} \nu_a(f),\quad
\nu_i(\mathcal{C}_j) =
\begin{cases}
\displaystyle\sum_{a\in\ct_i} \nu_a(\mathcal{C}_j) & \text{ if } j\in \{i,\dots,m\} \setminus \zz_i,\\
\displaystyle\sum_{a\in\ct_i} \nu_a(\mathcal{C}_j) + \mu_i(\mathcal{C}_j)& \text{ if }j\in\zz_i.
\end{cases}
\]
From the definition of the $\alpha$-coefficients in \eqref{eq:alphabeta} we obtain the statement.
\end{proof}

\begin{step}\label{casobeta}
For $j\in\spl_i$, let analogously $\mu_i(\tilde{\mathscr{H}}_j)$ be the multiplicity of $\tilde{\mathscr{H}}_j$ along $Z_i$ (in $X_{i-1}$). Then
\[
%\begin{align*}
\beta_i=
%&
%\sum_{a\in\ct_i}\nu_a(g) + \sum_{b\in\spl_i} r'_b + \sum_{j=s+1}^m\sum_{a\in\ct_i} k_j\nu_a(\mathcal{C}_j)+\sum_{j\in\zz_i} k_j \mu_i(\mathcal{C}_j)\\=&
\sum_{a\in\ct_i}\beta_a + \sum_{j\in\zz_i} k_j \mu_{i}(C_j)+ \sum_{j\in\spl_i} r'_j \mu_i(\tilde{\mathscr{H}}_j ).
%\end{align*}
\]
\end{step}

\begin{proof}[Proof of Step{\rm~\ref{casobeta}}]
The only difference with the proof of Step~\ref{casoalpha} is that we have to add the contribution
of $\nu_i(\tilde{\mathscr{H}}_j)$, when $j\in\spl_i$.
\end{proof}

Note  that $\tilde{\mathscr{H}}_j$ is not necessarily smooth in $X_i, i>s$. For example, take $n=3$ and consider a local system of parameters $x,y,z$ such that $E_s$, $E_j$ and $\tilde{\mathscr{H}}_j$  are given by $z=0$, $y=0$ and $z=y^\ell$, respectively, in $V\subset X_s$.  When moreover $Z_{s+1}$ is given by $x=z=0$, then in some chart of $X_{s+1}$ we have that $\tilde{\mathscr{H}}_j$ is given by $zx=y^\ell$ and hence it is singular.

% We have already computed $\nu_i(\mathcal{C}_j)$. For $\nu_i(g)$, we need only to compute the contribution
% of the strict transform which only appear for $b\in\spl_i$. Since we have assumed that $\ell_b$ is big enough, if $b\in\spl_i$,
% the special hypersurface $\mathscr{H}_j$ passes through $Z_i$ with multiplicity~$1$
% and the result follows.

\begin{step}\label{caseNprime}
For $i>s$, we have that
\begin{equation}\label{formula N'_i}
%N'_i=\sum_{a\in\ct_i\setminus\inc_i(\spl_i)} N'_a + \sum_{b\in\spl_i} (N'_{\inc_i(b)}-r'_b).
N'_i=\sum_{a\in\ct_i} N'_a - \sum_{j\in\spl_i} r'_j \mu_i(\tilde{\mathscr{H}}_j).
\end{equation}
% All the terms in the first summation are non-negative and the terms in the second one are positive.
\end{step}

\begin{proof}[Proof of Step{\rm~\ref{caseNprime}}]
The formula is obtained by subtracting the formulas in Steps~\ref{casoalpha} and~\ref{casobeta}.
\end{proof}

%Note that the first sum is non-negative and the second one is positive (if $\spl_i\neq\emptyset$), so a priori $N'_i$ could be negative.
%For the next steps, recall that $\pi_{s,i-1}(Z_i)\cap E_s\neq\emptyset$.

\begin{step}\label{imageofZ_i}
Fix $i>s$.

\begin{enumerate}[label=\rm(\arabic{enumi})]
\item\label{step4-1} There exists $a\leq s$ such that  $\pi_{s,i-1}(Z_i) \subset E_a$. %If there does not exist $k<s$ for which $\pi_{s,i-1}(Z_i) \subset E_k$, then $\pi_{s,i-1}(Z_i) \subset E_s$.

\item\label{step4-2} If $\spl_i \neq \emptyset$, then there exists $a<s$ such that $\pi_{s,i-1}(Z_i) \subset E_a$.

\end{enumerate}

\end{step}

\begin{proof}[Proof of Step{\rm~\ref{imageofZ_i}}]
Let us start with
\ref{step4-1}. In any case, $Z_i \subset E_q$ for some $q<i$, and then $\pi_{s,i-1}(Z_i) \subset \pi_{s,i-1}(E_q)$.
Either $q\leq s$, and then $\pi_{s,i-1}(Z_i) \subset E_q$, or $q >s$, and then $\pi_{s,i-1}(Z_i) \subset \pi_{s,q-1}(Z_q)$.
In this last case, there exist analogously $p<q \  (<i)$ such that $Z_q \subset E_p$, and we have two possible similar conclusions for $\pi_{s,q-1}(E_p)$.
Continuing this way, we obtain on $X_s$ a priori the two following possibilities.
Either
\begin{enumerate}[label=\rm(\alph{enumi})]
  \item  $\pi_{s,i-1}(Z_i) \subset E_a$ for some $a\leq s$, or
 % \item[(ii)] for all $k\leq s$ we have that $\pi_{s,i-1}(Z_i) \not\subset E_k$, and $\pi_{s,i-1}(Z_i) \subset E_s$.
 \item for all $a\leq s$ we have that $\pi_{s,i-1}(Z_i) \not\subset E_a$, and $\pi_{s,i-1}(Z_i) \subset \pi_{s,s}(Z_{s+1}) = Z_{s+1}$.
\end{enumerate}
But this last case is not possible since $Z_{s+1}$ must be contained in some $E_a, a\leq s$.
\smallskip

For \ref{step4-2}, let $Z_i \subset \tilde{\mathscr{H}}_j$, hence $\pi_{s,i-1}(Z_i) \subset \tilde{\mathscr{H}}_j$ in $X_s$. Note now that, by construction of $\tilde{\mathscr{H}}_j$, we have that $(E_s \cap \tilde{\mathscr{H}}_j)|_V = (E_s \cap E_j)|_V$.
Since, by construction of $V$, it contains $\pi_{s,i-1}(Z_i)$, we have either $\pi_{s,i-1}(Z_i) \subset E_k$ for some $k\neq j, k\neq s$, or $\pi_{s,i-1}(Z_i) \subset E_s\cap E_j \  (\subset E_j)$.
%So either $\pi_{s,i-1}(Z_i) \subset E_s \cap E_j (\subset E_j)$, and then the statement is proven, or $\pi_{s,i-1}(Z_i) \not\subset E_s$ and $\pi_{s,i-1}(Z_i) \not\subset E_j$
%We now show that this last case cannot occur. Suppose it does,  then
\end{proof}

\begin{step}\label{general positivity}
%Fix $i>s$.
 Given $r'_{s-k}, \dots, r'_{s-1}$, we can choose $N'_1, \dots, N'_{s-k-1}$ and $\ell_{s-k}, \dots, \ell_{s-1}$ big enough, depending on the $r'_j$ and the geometry of the last part $\pi_m \circ \dots \circ \pi_{s+1}$ of the given modification $\pi$, such that $N'_i \geq 0$ for all $i>s$. More precisely, we have the following:

\begin{enumerate}[label=\rm(\arabic{enumi})]
\item\label{step5-1} If there exists $b<s$ such that $\pi_{s,i-1}(Z_i) \subset E_b$, in particular if $\spl_i \neq \emptyset$, then $N'_i >0$.

\item\label{step5-2} If  $\displaystyle\pi_{s,i-1}(Z_i) \not\subset \bigcup_{b<s}E_b$, then $N'_i = 0$.
 \end{enumerate}
\end{step}

\begin{proof}[Proof of Step{\rm~\ref{general positivity}}]
For \ref{step5-1}, note that, if $\pi_{s,i-1}(Z_i) \subset E_b$, then $Z_i \subset \pi_{s,i-1}^{-1}(E_b)$. Hence, in the expression \eqref{formula N'_i} for $N'_i$, at least one of the $N'_a$ will contain $N'_b$ as summand.  The idea is to take all $N'_b, b<s,$ big enough, in order to compensate for the negative contributions in  $\eqref{formula N'_i}$.

We present a rough sufficient lower bound, depending only on $m-s$;
using more information about $\pi_m \circ \dots \circ \pi_{s+1}$, we could provide some sharper lower bound.
 For simplicity, denote $r':=\max\{r'_{s-k}, \dots, r'_{s-1}\}$.
A  local calculation shows that, with respect to the blow-up $\pi_{t+1}:X_{t+1} \to X_t$, the multiplicity of any point in $\tilde{\mathscr{H}}_j \subset X_{t+1}$ can be at most the double of the maximal multiplicity of points in $\tilde{\mathscr{H}}_j \subset X_{t}$.
So we can choose for example
\[
N'_b > 2^{m-s}r'kn   \quad\text{ for all } b\in\{1,\dots,s-1\},
\]
%\Longrightarrow
meaning in particular that
\[
\ell_b > \frac{2^{m-s}r'kn}{r'_b}  \quad\text{ for all }  b\in\{s-k,\dots,s-1\}.
\]
One can easily verify that then  $N'_i >0$ as soon as $\pi_{s,i-1}(Z_i) \subset E_b$ for at least one $b<s$.

%Hence $N'_i$ can be nonpositive only when $\spl_i = \emptyset$.  We show that then necessarily $N'_i=0$, by induction on $i$.
We show \ref{step5-2} by induction on $i$.
The case $i=s+1$ is clear using \eqref{formula N'_i}), since $\spl_{s+1} = \emptyset$ by Step \ref{imageofZ_i} and $\ct_{s+1}=\{s\}$.
Take now $i> s+1$.  We claim that
\[
\pi_{s,a-1}(Z_a) \not\subset \bigcup_{b<s}E_b\quad \text{for all } a \in\ct_i.
\]
Indeed, the inclusion $Z_i\subset E_a$ implies that $\pi_{s,i-1}(Z_i) \subset \pi_{s,a-1}(Z_a)$.

Then $N'_a=0$ for all $a \in\ct_i$ by induction.  And then $N'_i=0$, again using
\eqref{formula N'_i} and the fact that $\spl_{i} = \emptyset$ by Step \ref{imageofZ_i}.
\end{proof}

We may and will assume that  $N'_1, \dots, N'_{s-k-1}$ and $\ell_{s-k},\dots,\ell_{s-1}$ are chosen big enough as in Step \ref{general positivity}.
So, for $i>s$, if there exists $b<s$ such that $\pi_{s,i-1}(Z_i) \subset E_b$,  then $N_i \geq N'_i >0$.
%More generally, whenever $N'_i>0$, then $N_i>0$.
 We still have to show that, if $N'_i=0$, then we can make appropriate choices such that $N_i>0$. From Step \ref{general positivity}, we know that in this case $\displaystyle\pi_{s,i-1}(Z_i) \not\subset \bigcup_{b<s}E_b$.
We start with the case $i=s+1$.

\begin{step}\label{casefirstinduction}
%The claim holds for $i=s+1$.
Assume that $N'_{s+1}=0$. %Then necessarily $\spl_{s+1}=\emptyset$ and $\ct_{s+1}=\{s\}$.
%, and $\pi_s^*(Z_{s+1})=E_{s+1}$.
Then there exists a linear polynomial $\poly_{s+1}\in\mathbb{Q}[k_j\mid j\in\zz_{s+1}]$,
with positive coefficients in all variables such that
\[
\alpha_{s+1}=\alpha_s+a_{ss}W_{s+1}
\]
and, if
\[
\frac{\alpha_s}{a_{ss}}<\grande<\frac{\alpha_s}{a_{ss}}+W_{s+1}(k_j\mid j\in\zz_{s+1}),
\]
then $N_{s+1}>0$.
\end{step}

\begin{proof}[Proof of Step{\rm~\ref{casefirstinduction}}]
%The first inequality guarantees that $N_s=0$.
%From \eqref{eq:Nprimes1} we deduce that $\spl_{s+1}=\emptyset$ and $\ct_{s+1}=\{s\}$, and then   $N'_{s+1}=0$ by  (\ref{formula N'_i}).
%The statement on $\pi_s^*(Z_{s+1})$ is trivial.
% When $i=s+1$, the map $\inc_i$ is the inclusion. For any $b\in\spl_{s+1}$, we have
% that $N'_b=\ell_b r'_b$. Since $\ell_b\gg 1$, the term in the formula is positive.
% If $a\in\ct_{s+1}$, then $N'_a\geq 0$ and it vanishes only if $a=s$.
% Hence, if $N'_{s+1}=0$ if and only if $\ct=\{s\}$.
Since by assumption $\alpha_{s+1}=\beta_{s+1}$, we have that
\begin{equation}\label{eq:minspecial}
N_{s+1}=\alpha_{s+1}-\min(\alpha_{s+1},\grande a_{s,s+1}).
\end{equation}
Note that $a_{s,s+1}=\nu_{s+1}(\mathcal{C}_s)=\nu_{s}(\mathcal{C}_s)=a_{ss}$ and (by Step \ref{casoalpha})
\[
\alpha_{s+1}=\alpha_s+\sum_{j\in\zz_{s+1}} k_j \mu_{s+1}(\mathcal{C}_j).
\]
Recall that $s+1\in\zz_{s+1}$, so the summation above is \lq not empty\rq.
Let us define
\begin{equation}\label{eq:zs1}
W_{s+1}(k_j\mid j\in\zz_{s+1})=\frac{1}{a_{ss}}\sum_{j\in\zz_{s+1}} k_j \mu_{s+1}(\mathcal{C}_j),
\end{equation}
which is clearly a linear function with positive rational coefficients
since these multiplicities are always positive. If
\[
\grande<\frac{\alpha_s}{a_{ss}}+W_{s+1}(k_j\mid j\in\zz_{s+1}),
\]
then $\grande a_{ss}<\alpha_{s+1}$ and $N_{s+1}>0$ using \eqref{eq:minspecial}.

The  inequality $\frac{\alpha_s}{a_{ss}}<\grande$ in the statement guaranteed that $N_s=0$ (see Case \ref{case2}). The point is that there are integer solutions for $\ell$ and the $k_j$ satisfying both inequalities. (Recall that $\alpha_s$ itself is also a polynomial in the $k_j$.)
\end{proof}

Let us define recursively
$\tilde{\zz}_{s+1}:=\zz_{s+1}$ and
\[
\tilde{\zz}_{i}:=\zz_i\cup\bigcup_{b\in\ct_i} \tilde{\zz}_{b} \qquad\text{ for } i>s+1.
\]

\begin{step}
\label{casezerogeneral}
Let $i>s$ such that $N'_i=0$. Then
there exists  $p_i \in \mathbb{Z}_{>0}$ and a linear polynomial $W_i\in\mathbb{Q}[k_j\mid j\in\tilde{\zz}_i]$, with positive coefficients
in all variables, such that
\begin{equation}\label{eq:alpha_i via alpha_s}
\alpha_i  = p_i (\alpha_s + a_{ss} W_i)
\end{equation}
and, if
\begin{equation}\label{eq:condition}
\frac{\alpha_s}{a_{ss}}<\grande<\frac{\alpha_s}{a_{ss}}+W_i(k_j\mid j\in\tilde{\zz}_i) ,  %=\frac{\alpha_i}{p_i a_{ss}},
\end{equation}
then $N_i>0$.
\end{step}

\begin{proof}[Proof of Step{\rm~\ref{casezerogeneral}}]
We proceed  by induction on $i$. The case $s+1$ corresponds to Step~\ref{casefirstinduction} where
$\tilde{\zz}_{s+1}=\zz_{s+1}$ and
$W_{s+1}$ is shown in \eqref{eq:zs1}.

Let now $i>s+1$.  Since $\spl_{i}=\emptyset$, we have by \eqref{formula N'_i} that $N'_a =0$  for all $a\in\ct_i$.
Then
\begin{equation*}
\alpha_i=\sum_{a\in\ct_i}\alpha_a + \sum_{j\in\zz_i} k_j \mu_i(C_j)
=\left(\sum_{a\in\ct_i}p_a\right)\alpha_s  + a_{ss}\sum_{a\in\ct_i} p_a  W_a+ \sum_{j\in\zz_i} k_j \mu_i(C_j),
\end{equation*}
where the first equality is Step \ref{casoalpha}, and the second one is by induction.
We define $p_i := \sum_{a\in\ct_i} p_a$ and
\[
W_i:= \frac{1}{p_i}\sum_{a\in\ct_i}p_a W_a + \frac{1}{p_ia_{ss}} \sum_{j\in\zz_i} k_j\mu_i(C_j) \in\mathbb{Q}[k_j\mid j\in\tilde{\zz}_i],
\]
which is a linear function with positive coefficients in all variables.
Moreover
\[
\alpha_i=p_i (\alpha_s +a_{ss} W_i).
\]
Finally, recall that $a_{si}=a_{ss}$ and that the condition $N'_i=0$ says that $\alpha_i=\beta_i$. Then we know from \eqref{eq:min} that $N_i>0$ if \eqref{eq:condition} holds.
\end{proof}
If we choose the coordinates of $\mathbf{k}$ big enough the intervals of solution
are of length $>1$, and then we can ensure the existence of suitable $\grande$.
As final conclusion we then indeed obtain a function $h$ satisfying the assertion of the theorem.

%\bibliographystyle{amsplain}
%\bibliography{biblio}

\def\cprime{$'$}
\providecommand{\bysame}{\leavevmode\hbox to3em{\hrulefill}\thinspace}
\providecommand{\MR}{\relax\ifhmode\unskip\space\fi MR }
% \MRhref is called by the amsart/book/proc definition of \MR.
\providecommand{\MRhref}[2]{%
  \href{http://www.ams.org/mathscinet-getitem?mr=#1}{#2}
}
\providecommand{\href}[2]{#2}

\end{document}